\newcommand{\here}[2]{\tikz[remember picture]{\node[inner sep=0](#2){#1}}}
\newtheorem{thm}{Theorem}
\newtheorem{definition}{Definition}
\newtheorem{prop}{Proposition}
\newtheorem{cor}{Corollary}
\newtheorem{lemma}{Lemma}
\newtheorem{conjecture}{Conjecture}
\newtheorem{remark}{Remark}
\newcommand{\Stab}{{\rm Stab}}
\newcommand*{\rom}[1]{\expandafter\@slowromancap\romannumeral #1@}
\def\blfootnote{\gdef\@thefnmark{}\@footnotetext}
\def\house#1{\setbox1=\hbox{$\,#1\,$}
\dimen1=\ht1 \advance\dimen1 by 2pt \dimen2=\dp1 \advance\dimen2 by 2pt
\setbox1=\hbox{\vrule height\dimen1 depth\dimen2\box1\vrule}%
\setbox1=\vbox{\hrule\box1}%
\advance\dimen1 by .4pt \ht1=\dimen1
\advance\dimen2 by .4pt \dp1=\dimen2 \box1\relax}
\def\ord{{\mathrm{ord}}}
\def\hol{{\mathrm{Hol}}}
\newcommand{\N}{{\mathbb N}}
\newcommand{\Z}{{\mathbb Z}}
\newcommand{\thmref}{Theorem~\ref}
\newcommand{\propref}{Proposition~\ref}
\newcommand{\defref}{Definition~\ref}
\newcommand{\lemref}{Lemma~\ref}
\newcommand{\secref}{Section~\ref}
\newcommand{\conjref}{Conjecture~\ref}
\begin{document}
\title{Davenport constant and its variants for some non-abelian groups}
\author{C. G. Karthick Babu, Ranjan Bera, Mainak Ghosh and B. Sury}
\address{Stat-Math Unit, Indian Statistical Institute, 8th Mile Mysore Road,
Bangalore 560059}
\email{cgkarthick24@gmail.com}
\email{ranjan.math.rb@gmail.com}
\email{mainak.09.13@gmail.com}
\email{surybang@gmail.com}

\keywords{Davenport constant; Product-one sequence; Metacyclic group}    

\subjclass[2010]{Primary 20D60; Secondary 11B75, 11P70}




\maketitle

\begin{quote}\emph{Abstract.}  
We define two variants $e(G)$, $f(G)$ of the Davenport constant $d(G)$ of a finite group $G$, that is not necessarily abelian. These naturally arising constants aid in computing $d(G)$ and are of potential independent interest. We compute the constants $d(G)$, $e(G)$, $f(G)$ for some nonabelian groups G, and demonstrate that, unlike abelian groups where these constants are identical, they can each be distinct. As a byproduct of our results, we also obtain some cases of a conjecture of J. Bass. We compute the $k$-th Davenport constant for several classes of groups as well. We also make a conjecture on $f(G)$ for metacyclic groups and provide evidence towards it.
\end{quote}
\tableofcontents

\section{Introduction}

The Davenport constant arose in the context of class
groups of algebraic number fields where it gives information on the
number of prime ideals appearing as factors of any non-zero ideal.
For a finite abelian group $G$, it is the smallest positive integer
$d$ such that any sequence of elements (allowing repetition) of $G$
having length $d$, necessarily contains a subsequence
whose elements add to $0$. More generally, for a finite (not
necessarily abelian) group $G$, the {\it Davenport constant} $d(G)$
of a finite group $G$ (written multiplicatively) is the smallest
positive integer $d$ such that given any $d$-element sequence $S$ of
elements of $G$ (not necessarily distinct), it is possible to find some subsequence of $S$ whose terms have a product equal to the identity
when multiplied in some order. We remark that some
authors write $d(G)$ to mean $d(G)-1$ in our notation; that is, the length of the longest possible product-one free sequence.\\

One has $d(\Z_n) = n$, and also the trivial bound $d(G)
\le |G|$ for any $G$. The literature on Davenport constants for
abelian groups is vast.  Consider any finite abelian group
    \[G = \Z_{n_1} \oplus \Z_{n_2} \oplus \cdots \oplus \Z_{n_k}\]
    where $n_1\mid n_2\mid \cdots \mid n_{k-1}\mid n_k$.
If $M(G) := 1 + \displaystyle\sum_{i=1}^{k} (n_i - 1)$, then $d(G)
\ge M(G)$ as the sequence $S$, which contains $n_i - 1$ many
copies of $e_i$ for each $i$, contains no zero-sum subsequence, where 
\[
e_i=(0,\ldots,0,\here{1}{fromhere},0,\ldots,0).
\]
\begin{tikzpicture}[remember picture, overlay]
\node[font=\scriptsize, below right=12pt of fromhere] (tohere) {$i^{th}$-entry};
\draw ([yshift=-4pt]fromhere.south) |- (tohere);
\end{tikzpicture}

Olson proved (\cite{Ol1}, \cite{Ol2}) for rank two finite abelian
groups $G = \Z_m \oplus \Z_n$, where $m \mid n$ , and for finite
abelian $p$-groups $G = \Z_{p^{a_1}} \oplus \Z_{p^{a_2}} \oplus
\cdots \oplus \Z_{p^{a_k}}$, that $d(G) = M(G)$. Based on these, he
conjectured that this always holds for finite abelian groups.
However, Olson's conjecture does not hold in general, and there are
infinitely many rank $4$ finite abelian groups for which it fails. In fact, given any $m\in \N$, one can find a finite
abelian group $G$ for which $d(G) \ge M(G) + m$.


In the case of finite, non-abelian groups, Jared Bass
(\cite{JB07}) computed $d(G)$ for several classes of non-abelian
groups, and raised many questions. For instance, he proved that $d(\mathbf{D}_{2n}) = n+1$ and $d(\mathbf{Q}_{4n})=2n+1$. He also
considered
$$G_{a,b,g} :=  \Z_{a}\ltimes_g \Z_{b} = \left\langle x, y \mid x^b = y^a = 1, x^g y = yx \right\rangle,$$ with ${\rm ord} _{b}(g)=a$; he conjectured that
$a+b-1$ is the Davenport constant for this group. In the special
case where $q$ is prime, he proved this conjecture, showing
$d(G_{n,q,g}) = n+q-1$. Han and
Zhang proved (\cite{HZ}) that $d(\Z_{m}\ltimes \Z_{mn}) = mn + m -
1$.  Qu and Li made some progress towards Bass's conjecture, proving
(\cite{QL23}) that $d(G_{p,n,g}) = p + n - 1$, if $p$ is smaller
than every prime dividing $n$. Our results also include a proof of
Bass's conjecture for the more general case $\mathbb{Z}_p \ltimes
\mathbb{Z}_n$ for any prime $p$ and any positive integer $n$. \\

In this paper, while studying the Davenport constant for
certain non-abelian groups, some new variants naturally arose which
do not arise in the case of abelian groups. We believe these variants are of independent interest. We compute these variants for some groups and use them in determining the Davenport constant for certain other groups. 

Before we describe the statements of our main theorems, we need to set up some notations and definitions formally.

\subsection{Notations}\label{notation}
We define a \textit{sequence} of elements of a group $G$ as a tuple $S=(a_1, \ldots, a_n)$, where each $a_r \in G$. A \textit{subsequence} of $S$ is defined by taking a nonempty subset of the set of indices of $S$, say $\{i_1,\ldots, i_m\}\subset \{1,\ldots, n\}$, and constructing a sequence $T_1 = (a_{i_1},\ldots, a_{i_m})$. If $T_1$ is a proper subsequence of $S$, we define the deletion of $T_1$ from $S$, written as $S\setminus T_1$ to be the subsequence corresponding to the set of indices $\{1,\ldots, n\}\setminus\{i_1,\ldots, i_m\}$, in increasing order. 

Given another subsequence $T_2 = (a_{j_1},\ldots, a_{j_{\ell}})$ of $S$, we say that $T_1$ and $T_2$ are \textit{disjoint as subsequences} if the corresponding subsets of indices $\{i_1,\ldots, i_m\}$ and $\{j_1,\ldots, j_{\ell}\}$ are disjoint. If $T_1$ and $T_2$ are disjoint as subsequences, the \textit{concatenation} of $T_1$ and $T_2$, denoted $T_1 T_2$, is defined to be the subsequence $(a_{i_1},\ldots, a_{i_m}, a_{j_1},\ldots, a_{j_{\ell}})$. Given any subset $A\subseteq G$, we denote by $S\cap A$ to be the subsequence of $S$ obtained by deleting all the terms not contained in $A$.

For a group $G$ and for a sequence $S=(a_{1}, \ldots, a_{n})$ in it, we denote by $\Pi(S)$, the set of all products $a_{\sigma(1)} \cdots a_{\sigma(n)}$ as $\sigma$ varies over ${\rm Sym}(n)$. We call $S$ to be a \textit{product-one sequence} if $1\in \Pi(S)$. A subsequence $T$ of $S$ is called a \textit{product-one subsequence} of $S$ if $1\in \Pi(T)$. A sequence $S$ is called \textit{product-one free} if it does not contain a product-one subsequence.

We usually denote by $|S|$ the \textit{length} of a sequence $S$, and from the context, there will be no confusion with cardinality. Recall that the \textit{Davenport constant} for $G$, denoted $d(G)$, is defined to be the smallest positive integer $m$ such that any sequence of elements of $G$ of length $m$ contains a product-one subsequence.

A generalization $d_k(G)$ of $d(G)$ is defined (for abelian groups, but the definition is valid in general) in \cite{HK} for any positive integer $k$ as follows:

\begin{definition}[\cite{HK}]\label{d_k}
Let $G$ be a finite group and $k$ be a positive integer. The $k$\textit{-th Davenport constant} $d_k(G)$ is defined to be the smallest positive integer $m$, such that, given any sequence of elements of $G$ of length $m$, one can find $k$-many disjoint product-one subsequences of it.
\end{definition}

We define some classes of non-abelian groups that we will use in this paper. The dihedral group of order $2n$ is defined as $\mathbf{D}_{2n} :=  \langle x , y \mid x^n = y^2 = 1, x^{-1}y = yx \rangle$. The dicyclic group of order $4n$ is defined as $\mathbf{Q}_{4n} := \langle x, y \mid x^n = y^2, x^{2n} = 1, x^{-1}y = yx \rangle$. By abuse of notation, we denote by $\Z_a\ltimes\Z_b$ the metacyclic group $G_{a,b,g}:= \Z_{a}\ltimes_g \Z_{b} = \left\langle x, y \mid x^b = y^a = 1, x^g y = yx \right\rangle,$ with $\ord_{b}(g)=a$. One important class of metacyclic groups is that of holomorphs $\hol(\mathbb{Z}_{p^n})$ of cyclic groups of $p$-power order for an odd prime $p$; these are the semidirect product groups $Aut(\mathbb{Z}_{p^n}) \ltimes \mathbb{Z}_{p^n}\cong \Z_{p^{n-1}(p-1)}\ltimes \Z_{p^{n}} $.

\subsection{Main results}\label{results} 
We introduce two new variants of the Davenport constant for a finite group $G$.
\begin{definition}\label{f and e}
Let $G$ be a finite group with Davenport's constant $d$. We define the constant $f(G)$ as the smallest positive integer $m$ such that given any sequence $S$ of elements of $G$ of length $d$, one can always find a product-one subsequence $T \subset S$ of length at most $m$.

Similarly, we also define the constant $e(G)$ to be the smallest number $m$ such that given any product-one sequence $S$ of elements of $G$, one can always find a product-one subsequence $T$ of length at most $m$.
\end{definition}
Clearly, we have $d(G) \geq e(G) \geq f(G) \geq \max \{\ord(g):g \in G\}$. It turns out that for abelian $G$, we have $d(G)=e(G)=f(G)$. To see this, let $S'=(a_{1}, a_{2}, \ldots, a_{d(G)-1})$ be a maximal zero-sum free sequence. Define $a=a_{1}+a_{2}+\cdots+ a_{d(G)-1}$. Then $S=S'\cdot (-a)$ is a sequence of length $d(G)$ that has no proper zero-sum subsequence. Therefore, we get $f(G) \geq d(G)$ and hence we are done by the above inequalities.

As a consequence of our results, we get examples of group $G$ where $d(G)$, $e(G),$ and $f(G)$ \textbf{are all distinct.}

Our purpose is to obtain values and bounds for $d(G)$, $e(G)$, $f(G)$ and $d_k(G)$, for some classes of non-abelian (mostly metacyclic) groups $G$.\\

At first, we compute the values of $e(G)$ and $f(G)$ where $G$ is a dihedral or dicyclic group.

\begin{thm}\label{dihdic} \ 
\begin{enumerate}[label=(\alph*)]
    \item $f(\mathbf{D}_{2n})= e(\mathbf{D}_{2n})=d(\mathbf{D}_{2n})-1=n$, for any positive integer $n\ge 3$.
    \item $f(\mathbf{Q}_{4n})= e(\mathbf{Q}_{4n})=d(\mathbf{Q}_{4n})-1=2n$, for any positive integer $n\ge 2$.
\end{enumerate}
\end{thm}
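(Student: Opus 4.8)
\emph{Reduction.} The plan is to start from Bass's values $d(\mathbf{D}_{2n})=n+1$ and $d(\mathbf{Q}_{4n})=2n+1$. For the lower bounds, observe that $x$ has order $n$ in $\mathbf{D}_{2n}$ and order $2n$ in $\mathbf{Q}_{4n}$, so the sequence $S=(x,x,\dots,x)$ of length $d(G)$ is product-one and its only product-one subsequences consist of $\ord(x)$ copies of $x$ (since $2\,\ord(x)>d(G)$, no longer repetition can occur). Hence $f(G)\ge\ord(x)=d(G)-1$, and because $d(G)\ge e(G)\ge f(G)$ (the chain recorded after \defref{f and e}), everything reduces to the single assertion $e(G)\le d(G)-1$, i.e.\ $e(\mathbf{D}_{2n})\le n$ and $e(\mathbf{Q}_{4n})\le 2n$. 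Next I would use that $e(G)$ equals the maximal length of a \emph{minimal} product-one sequence, that is, one having no proper product-one subsequence: every product-one sequence contains a minimal one (delete proper product-one subsequences repeatedly), while a longest minimal product-one sequence has no product-one subsequence but itself. A minimal product-one sequence has length at most $d(G)$ (any length-$d(G)$ subsequence of a longer one would contain a proper product-one subsequence), so it remains to rule out minimal product-one sequences of length \emph{exactly} $d(G)$.

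\emph{Core argument.} Suppose $S$ is minimal with $|S|=d(G)$, and let $C=\langle x\rangle$, the cyclic normal subgroup of index $2$ (isomorphic to $\Z_n$, resp.\ $\Z_{2n}$). Write $S=S_0S_1$ with $S_0=S\cap C$ and $S_1$ the part outside $C$; since the image of $S$ in $G/C\cong\Z_2$ is product-one, $2t:=|S_1|$ is even. If $t=0$ then $S$ is a minimal zero-sum sequence over the cyclic group $C$, so $|S|\le|C|<d(G)$, a contradiction; hence $t\ge 1$. Minimality then excludes all degenerate configurations: $S_0$ is zero-sum free (otherwise a zero-sum subsequence of $S_0$ is a proper product-one subsequence), so $|S_0|\le|C|-1$; the identity does not occur in $S_0$ and $S_0$ has no pair $x^b,x^{-b}$; and repeated or ``mirror'' outside elements are forbidden (e.g.\ in $\mathbf{D}_{2n}$ two outside elements $x^ay,x^by$ with $a=b$ multiply to $1$, so the $2t$ outside exponents are distinct and $2t\le n$, with an analogous non-degeneracy in $\mathbf{Q}_{4n}$). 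The heart of the matter is a description of product-one subsequences that mix rotations and outside elements: using $x^{-1}y=yx$ one computes the product of such a word by noting that each outside element toggles a sign and each rotation $x^b$ contributes $+b$ or $-b$ according as evenly or oddly many outside elements precede it (and each pair of outside elements produces an extra constant $n$ when $y^2=x^n$, as in $\mathbf{Q}_{4n}$). This yields: a sub-multiset of $2s$ outside elements with exponents $a_i$ and a sub-multiset of rotations with exponents $b_j$ form a product-one subsequence exactly when
\[
\sum_i\epsilon_i a_i+\sum_j\delta_j b_j+c=0\quad\text{in }C,
\]
for some signs with exactly $s$ of the $\epsilon_i$ equal to $+1$, where $c=0$ for $\mathbf{D}_{2n}$ and $c=tn$ for $\mathbf{Q}_{4n}$. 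One now produces a \emph{proper} solution of total length $\le d(G)-1$. If $t=1$ then $S_0$ is a zero-sum free sequence of maximal length $|C|-1$, which by the inverse theorem for zero-sum free sequences of maximal length over cyclic groups is a constant sequence $g,\dots,g$ with $g$ a generator of $C$; a suitable short block of copies of $g$ — placed either before or between the two outside elements — then satisfies the equation and has length $\le d(G)-1$. If $t\ge 2$ there are at least four outside elements; here one argues by pigeonhole among the outside exponents and their pairwise differences, combined with the freedom in the signs $\delta_j$, to solve the equation using few elements. The $\mathbf{Q}_{4n}$ case runs in parallel, carrying the shift $c=tn$ throughout.

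\emph{Main obstacle.} The difficulty is concentrated in this mixed regime. Since the outside elements do not centralize $C$, the product of a subsequence genuinely depends on the order of multiplication, so the sign bookkeeping above must be handled carefully; and controlling the \emph{length} of the subsequence one extracts requires quantitative information on how many rotations are needed to cancel a prescribed outside contribution, which is exactly where the structure theory of long zero-sum free sequences in cyclic groups enters. The dicyclic case additionally requires keeping track of the constant $tn$ coming from $y^2=x^n$.
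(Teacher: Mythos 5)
Your reduction is exactly the paper's: $f(G)\ge\ord(x)=d(G)-1$ together with $d(G)\ge e(G)\ge f(G)$ leaves only the claim that no minimal product-one sequence of length $d(G)$ exists, and your cases $t=0$ and $t=1$ (the latter via the inverse theorem for maximal zero-sum free sequences over cyclic groups) can indeed be pushed through. The gap is the case $t\ge 2$, which is precisely where all the difficulty sits and which you dispose of with ``pigeonhole among the outside exponents and their pairwise differences, combined with the freedom in the signs.'' That method does not suffice: for four outside elements of $\mathbf{D}_{2n}$ with distinct exponents $a_1,\dots,a_4$, a product-one $4$-subsequence requires a pairing with $a_i+a_k\equiv a_j+a_l$, and already $\{0,1,2,4\}$ modulo $7$ admits no such pairing, so one is forced to interact with the rotation part $S_0$, which is an arbitrary zero-sum free sequence in $\Z_n$ with $n$ not necessarily prime. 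Controlling the resulting signed sumsets then needs Kneser/DGM-type stabilizer arguments or structural information about long zero-sum free sequences --- in effect you would be re-deriving the classification of extremal product-one-free sequences in these groups. The paper sidesteps all of this by importing exactly that classification from Brochero Mart\'{\i}nez and Ribas (\cite{MR18}, Theorems 1.3 and 1.4): deleting any term of a hypothetical minimal $S$ of length $d(G)$ yields an extremal product-one-free sequence, whose known shape (e.g.\ $n-1$ copies of $x^t$ plus one reflection for $n>3$) forces $\Pi(S)\cap\langle x\rangle=\emptyset$, a contradiction. Without that input, or a worked-out substitute for $t\ge2$, your argument is incomplete.

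A secondary inaccuracy: in $\mathbf{Q}_{4n}$ repeated outside elements are \emph{not} forbidden by minimality, since $(x^ay)^2=y^2=x^n\neq1$ (only four copies of the same outside element yield a product-one subsequence). The extremal sequences for $\mathbf{Q}_{8}$ listed in the paper indeed contain $x^sy$ three times, so the dicyclic bookkeeping genuinely differs from the dihedral one and your ``analogous non-degeneracy'' does not hold as stated.
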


Our next result is for a specific class of metacyclic groups.
\begin{thm}\label{fconstgeneraln}
Given any prime $p$ and positive integer $n>1$, we have
\begin{align*}
d(\Z_p \ltimes \Z_n) =& n+p-1;\\
f(\Z_p \ltimes \Z_n) =&n.
\end{align*}
\end{thm}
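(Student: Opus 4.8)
\emph{Lower bounds.} The plan is first to record the two extremal sequences. Since $g$ has order $p$ in $(\Z/n\Z)^\times$, one has $p\mid\varphi(n)\le n-1$, so $p<n$; this will be used repeatedly. For $d(\Z_p\ltimes\Z_n)\ge n+p-1$ I would take the sequence made of $n-1$ copies of $x$ and $p-1$ copies of $y$: collecting all occurrences of $y$ on the right, any ordered product of a subsequence with $a$ copies of $x$ and $b$ copies of $y$ equals $x^{m}y^{b}$ for some integer $m$, so a product-one subsequence forces $b\equiv0\pmod p$, hence $b=0$, and then $m=a\equiv 0\pmod n$, hence $a=0$ -- a contradiction; thus this product-one free sequence has length $n+p-2$. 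The bound $f(\Z_p\ltimes\Z_n)\ge n$ is then immediate from $f(G)\ge\max\{\ord(h):h\in G\}$ (recorded in the introduction) and $\ord(x)=n$, once one checks that no element of $\Z_p\ltimes\Z_n$ has order exceeding $n$.

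\emph{Reduction of the upper bounds.} It will suffice to prove the single assertion: \emph{every sequence $S$ of length $n+p-1$ in $G=\Z_p\ltimes\Z_n$ contains a product-one subsequence of length at most $n$.} Indeed this gives $d(G)\le n+p-1$, hence $d(G)=n+p-1$ with the lower bound, and then $f(G)\le n$, hence $f(G)=n$. Let $\pi\colon G\to\Z_p$ be the quotient map, $N=\ker\pi=\langle x\rangle\cong\Z_n$, and split $S=UW$ where $U$ is the subsequence of terms lying in $N$ and $W$ that of terms outside $N$. If $|U|\ge n$, then $d(\Z_n)=n$ furnishes a product-one subsequence inside $U$, necessarily of length $\le n$, and we are done; so I may assume $|U|\le n-1$, hence $|W|\ge p$.

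\emph{The main case.} Here lies the real work. Using $d(\Z_p)=p$ on the non-kernel part, one can choose $T\subseteq W$ with $|T|\le p$ and $\sum_{a\in T}\pi(a)=0$ in $\Z_p$; then every ordered product of $T$ lies in $N$, so $\Pi(T)=\{x^{c}:c\in C_T\}$ for some nonempty $C_T\subseteq\Z_n$. Because the terms of $U$ are powers of $x$, appending to $T$ a sub-multiset of $U$ of subset sum $\sigma$ turns this product into $x^{c+\sigma}$, so we would be done as soon as $-c\in\Sigma(U)$ for some $c\in C_T$, where $\Sigma(U)$ denotes the set of subset sums of $U$. We may assume $U$ is itself zero-sum free (otherwise $U$ already contains a short product-one subsequence), so that $\Sigma(U)$ is large by the standard lower bound on subset sums of zero-sum free sequences over a cyclic group; combined with the freedom to shrink $T$ and to use only part of $U$, this should let us both reach the identity and keep the total length $\le n$ in the favourable situations. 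To make this systematic, the plan is to pin down $C_T$ precisely: after collecting $y$-powers, $C_T$ is governed by the integers $N_t:=1+g^t+\cdots+g^{(p-1)t}\bmod n$ for the exponents $t$ occurring in $T$, and here it helps to exploit the isomorphism $\Z_p\ltimes\Z_n\cong\Z_{n'}\times(\Z_p\ltimes\Z_{n''})$, where $n'$ is the product of the prime-power components of $n$ fixed by $g$ and $n''=n/n'$. In the generic case $n'=1$ and $p\nmid n$, one checks $N_t\equiv 0\pmod n$ for every $t\not\equiv0$, so every non-kernel element has order exactly $p$ and distinct Sylow $p$-subgroups intersect trivially, which keeps the bookkeeping manageable; one might also try to run an induction on $n$ via $G/\langle x^{n/q}\rangle$ for a prime $q\mid n$ chosen so that this quotient is again of the form $\Z_p\ltimes\Z_{n/q}$.

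\emph{The main obstacle.} The hard part will be the non-generic cases: when $g$ fixes a nontrivial prime-power component of $n$ (so $n'>1$), or when $p^{2}\mid n$, the group $G$ has non-kernel elements of order strictly greater than $p$ -- indeed $p$ copies of such an element already form a product-one free sequence consisting of $p$ non-kernel terms -- so the shape of $\Pi(T)$ is no longer dictated by orders alone, and the arithmetic of $\gcd(g^t-1,n)$ genuinely enters the picture. This is precisely the phenomenon that confined earlier treatments to $p$ smaller than every prime divisor of $n$. Handling $\Pi(T)$ and its interaction with the kernel part $U$ in these cases, while still delivering a product-one subsequence of length at most $n$ for the $f$-statement, is the crux of the argument.
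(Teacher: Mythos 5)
Your lower bounds are correct and match the paper's. But the upper bounds are not proved: your "main case" is a plan rather than an argument, and you say so yourself in the final paragraph. Concretely, after extracting $T\subseteq W$ with $\Pi(T)\subseteq\langle x\rangle$, you need (i) control of the set $C_T$ of exponents arising in $\Pi(T)$, (ii) a large enough set of subset sums of $U$ to hit $-C_T$, and (iii) the total length $|T|+|U'|\le n$ for the $f$-statement. Even in your "generic" case none of these three is carried out (e.g.\ if $|W|>p$ then $|U|<n-1$ and the zero-sum-free bound $|\Sigma(U)|\ge|U|$ no longer covers $\Z_n\setminus\{0\}$, and with $|T|\le p$ plus up to $n-1$ kernel terms the length can reach $n+p-2>n$), and in the non-generic cases ($g$ fixing a prime-power component of $n$, or $p^2\mid n$) you explicitly leave the crux open. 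So there is a genuine gap: the heart of the theorem is missing.

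It is worth knowing that the paper does not attack the problem head-on in the way you propose. It first proves a general extension inequality: for $1\to H\to G\to K\to 1$ one has $d(G)\le(d(H)-1)f(K)+d(K)$, and if equality holds then $f(G)\le f(H)f(K)$. It then picks a prime $q$ with $g\not\equiv 1\pmod q$ (or, if none exists with $q\ne p$, the prime $p$ itself with $p^k\,\|\,n$, $k>1$), quotients $G$ by $\langle x^q\rangle$ (resp.\ $\langle x^{p^k}\rangle$) to land in $\Z_p\ltimes\Z_q$ (resp.\ $\Z_p\ltimes\Z_{p^k}$), and feeds in the separately proved base cases $f(\Z_p\ltimes\Z_q)=q$ and $f(\Z_p\ltimes\Z_{p^k})=p^k$; these base cases are themselves substantial (Cauchy--Davenport, the DeVos--Goddyn--Mohar theorem, and ad hoc lemmas on three distinct elements). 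In other words, exactly the arithmetic complications you flag as "the main obstacle" are sidestepped by pushing them into an abelian kernel $\langle x^q\rangle$ and a small non-abelian quotient, rather than analysed directly. If you want to salvage your approach, proving and using such an extension/quotient lemma is the missing idea.
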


For a general metacyclic group, we conjecture the following:
\begin{conjecture}\label{conj5}
Given any positive integers $m,n$, we have $f(\Z_m\ltimes\Z_n)=n$.
\end{conjecture}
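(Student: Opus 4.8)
Write $G=\Z_m\ltimes_g\Z_n=\langle x,y\rangle$ and $N=\langle x\rangle\cong\Z_n$, a normal subgroup with $G/N\cong\Z_m$. The plan is to prove the two inequalities $f(G)\ge n$ and $f(G)\le n$ separately, the first being essentially free and the second carrying all the difficulty.

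The bound $f(G)\ge n$ follows from the chain of inequalities recorded in the excerpt: $f(G)\ge\max\{\ord(h):h\in G\}\ge\ord(x)=n$, so it is enough to check that no element of $G$ has order exceeding $n$. For $h=x^c y^e$ with $0\le e<m$, put $t=m/\gcd(m,e)$, so that $g^e$ is a unit of order $t$ modulo $n$; then $h^k\notin N$ when $t\nmid k$, while $h^{tk}=x^{kcM}$ with $M=1+g^e+\cdots+g^{(t-1)e}\bmod n$. Since $(g^e-1)M=g^{te}-1\equiv 0\Mod n$, one gets $\ord(h)=t\cdot n/\gcd(n,cM)\le t\cdot\gcd(n,g^e-1)\le n$, the last step being the elementary fact that $\ord_n(u)\mid n/\gcd(n,u-1)$ for a unit $u$ modulo $n$ (checked prime power by prime power). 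Hence $f(G)\ge n$.

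For $f(G)\le n$, since $d(G)\ge m+n-1$ (the sequence with $n-1$ copies of $x$ and $m-1$ copies of $y$ is product-one free), it suffices to prove: \emph{every sequence $S$ over $G$ with $|S|=m+n-1$ contains a product-one subsequence of length at most $n$}. Note that such a statement also forces $d(G)\le m+n-1$, hence $d(G)=m+n-1$, so it is at least as strong as Bass's conjecture for $G$. I would argue by a dichotomy on $k:=|S\cap N|$. If $k\ge n$, then $S\cap N$ is a sequence of at least $d(\Z_n)=n$ elements of $N\cong\Z_n$, hence contains a zero-sum subsequence of length $\le n$, which is the desired product-one subsequence of $S$. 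If $k\le n-1$, then $S$ has at least $(m+n-1)-(n-1)=m$ terms outside $N$, whose images in $G/N\cong\Z_m$ are all nonzero; as $d(\Z_m)=m$, we may pick among them a subsequence $T$ with $|T|\le m$ whose image is zero-sum in $\Z_m$. Then $\Pi(T)\subseteq N$, say $\Pi(T)=\{x^c:c\in C_T\}$ for some $C_T\subseteq\Z_n$, and if $0\in C_T$ we are done with $|T|\le m\le n-1$. Otherwise one must annihilate the $N$-component: one looks for a further subsequence $U$ of $S\setminus T$, built from terms of $S\cap N$ together with possibly more disjoint $\Z_m$-zero-sum blocks (whose $\Pi$-sets add inside $\Z_n$, since $N$ is abelian), so that $1\in\Pi(TU)$ and $|T|+|U|\le n$.

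This last step is where I expect the real obstacle to lie: guaranteeing that $1$ can be reached while spending at most $n$ terms, uniformly in $m$, $n$ and the admissible multiplier $g$. The two cases already settled show what kind of input is needed. When $n$ is prime, as in \thmref{fconstgeneraln}, $\Z_n$ is a field and the weighted subset-sum questions that arise — can a short selection of the $x$-exponents of $T$, each scaled by the power of $g$ dictated by its position, be made to sum to a prescribed value? — yield to the polynomial method (Chevalley--Warning, Combinatorial Nullstellensatz) or to linear algebra over $\mathbb F_n$, which simultaneously produces a $T$ with $0\in C_T$ and controls the length. When $G$ is dihedral, as in \thmref{dihdic}, one instead invokes the classification of maximal zero-sum-free sequences over $\Z_n$ (they consist of $n-1$ copies of one generator) together with the symmetry $x^c\leftrightarrow x^{-c}$ in the product set of a pair of reflections, which is exactly enough to keep the count below $n$. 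For general metacyclic $G$ with $n$ composite and $g$ acting on $\Z_n$ with several orbits, the set $C_T$ can be intricate and there is no weighted Davenport / Erd\H{o}s--Ginzburg--Ziv inequality of the required strength available off the shelf; producing one — or a structural description of $C_T$ and of its iterated sumsets that can be balanced against the at most $n-1$ terms of $S\cap N$ — is the crux. The evidence for \conjref{conj5} I would cite is that it holds by \thmref{dihdic} and \thmref{fconstgeneraln}, and that it can be verified by direct computation for small values of $m$ and $n$.
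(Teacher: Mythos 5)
The statement you were asked to prove is \conjref{conj5}, which the paper itself leaves open: it is stated as a conjecture, supported only by the special cases $\mathbf{D}_{2n}$ (\thmref{dihdic}), $\Z_p\ltimes\Z_n$ with $p$ prime (\thmref{fconstgeneraln}), and $\hol(\Z_p)$ for $p=3,5$ (\propref{holz5}). So there is no proof in the paper to compare against, and your write-up is, correctly, not a proof either: you establish the lower bound, set up a plausible reduction for the upper bound, and then explicitly flag that the key step is missing. That assessment of the state of affairs is accurate, and your partial progress is sound as far as it goes. Two small remarks on the part you do prove: the lower bound needs only $f(G)\ge\ord(x)=n$, so your verification that no element of $G$ has order exceeding $n$ is superfluous (though not wrong); and your observation that $|T|\le m\le n-1$ silently uses $m=\ord_n(g)\mid\phi(n)$, which is worth saying since $m,n$ are a priori arbitrary.

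On the upper bound, two caveats about your proposed route. First, reducing to the claim that every sequence of length $m+n-1$ contains a product-one subsequence of length $\le n$ bundles in Bass's conjecture $d(\Z_m\ltimes\Z_n)=m+n-1$, which is itself open in this generality (the paper proves it only for $m$ prime); this makes your target sufficient but strictly stronger than \conjref{conj5}, and a failure of Bass's conjecture for some $G$ would sink this reduction without refuting $f(G)=n$. Second, your description of how the known cases are handled does not match the paper's actual machinery: for $n=q$ prime the paper does not use Chevalley--Warning or the Combinatorial Nullstellensatz, but rather Bass's rotation lemma (a minimal sequence $T$ with $\Pi(T)\subseteq H$ and $1\notin\Pi(T)$ satisfies $|\Pi(T)|\ge|T|$), growth lemmas for product sets built from three distinct elements, and the Cauchy--Davenport and DeVos--Goddyn--Mohar inequalities; the general $n$ (with $m=p$ prime) case is then obtained by the extension inequality $d(G)\le(d(H)-1)f(K)+d(K)$ with $f(G)\le f(H)f(K)$ when equality holds. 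The genuine obstacle you identify --- controlling the weighted product sets $C_T$ over $\Z_n$ for composite $n$ when $\langle g\rangle$ has small orbits --- is exactly where the paper also stops, so the honest conclusion is that your proposal records the same evidence the paper does and leaves the same gap open.
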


In particular, our conjecture would imply that $f(\hol(\Z_p))= p$ for all odd primes $p$. We verify this statement for $p=3, 5$. For higher value of $p$, we obtain the weaker bound:
\begin{thm}\label{holozp}
For primes $p>5$, we have  $e(\hol(\mathbb{Z}_p)) \leq p +
\frac{p-3}{2}$, and, therefore, $f(\hol(\mathbb{Z}_p)) \leq p +
\frac{p-3}{2}$.
\end{thm}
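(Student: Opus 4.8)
The plan is to pass to the affine model $G=\hol(\Z_p)=\{(a,b):a\in\Z_p^{\times},\,b\in\Z_p\}$, in which $(a,b)$ denotes the map $x\mapsto ax+b$ and $(a_1,b_1)(a_2,b_2)=(a_1a_2,\,a_1b_2+b_1)$, so that $N=\{(1,b):b\in\Z_p\}\cong\Z_p$ is normal and $G/N\cong\Z_p^{\times}\cong\Z_{p-1}$. I would first reduce the assertion to a bound on the length of a \emph{minimal} product-one sequence (one with no proper product-one subsequence): since $e(G)$ is exactly the largest such length, and $f(G)\le e(G)$ by the inequalities already noted, it is enough to prove $|S|\le p+\tfrac{p-3}{2}$ for every minimal product-one $S$. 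Writing the terms as $(a_i,b_i)$, two elementary facts are used throughout: (i) if $a\ne1$ then $(a,b)$ has order exactly $\ord_p(a)$, so no element $(a,b)$ occurs $\ord_p(a)$ times in $S$; and (ii) $S_0:=S\cap N$, read inside $N\cong\Z_p$, is zero-sum free, for otherwise a proper zero-sum subsequence of $S_0$ would already be product-one in $G$ (unless $S\subseteq N$, a case disposed of by $|S|\le d(\Z_p)=p$), whence $|S_0|\le p-1$. Likewise, if every $a$-part occurring in $S$ lies in $\{\pm1\}$ then $S$ is contained in a dihedral subgroup $\cong\mathbf{D}_{2p}$ and $\thmref{dihdic}$ gives $|S|\le e(\mathbf{D}_{2p})=p$; so we may assume this is not the case.

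The heart of the argument concerns $S_1:=S\setminus S_0$. Its image $\overline{S_1}$ in $\Z_{p-1}$ is a zero-sum sequence with no zero term; decompose it into minimal zero-sum subsequences $B_1,\dots,B_r$ and lift this to a factorization $S_1=U_1\cdots U_r$ with $|U_i|=|B_i|\le p-1$. For each $i$ put $V_i=\{c\in\Z_p:(1,c)\in\Pi(U_i)\}$, the set of $b$-coordinates realized by the products of the terms of $U_i$ taken in some order. A product of some of the terms of $S$ lands in $N$ only if it runs over a union of whole pieces $U_i$; and inserting a block of $S_0$-terms at a point where the running $a$-product equals $\rho$ adds $\rho$ times the sum of that block to the final $b$-coordinate. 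Hence a subsequence formed by some $U_i$'s together with a subset of $S_0$ is product-one precisely when an equation $c+\rho\,y=0$ is solvable with $c$ in the relevant sumset of the $V_i$'s, with $y$ a subset sum of $S_0$, and with $\rho$ a realizable partial $a$-product. Minimality says that no such equation is solvable for a proper sub-collection, while one is solvable for all of $S$. Applying the Cauchy--Davenport inequality to $V_1+\dots+V_r$ and to the set of subset sums of $S_0$ (which has at least $|S_0|+1$ elements since $S_0$ is zero-sum free), together with the extra room provided by the scaling factors $\rho$ and by restricting to sub-collections, yields inequalities of the form $\sum_i(|V_i|-1)+\lambda\,|S_0|\le p-1$ (with $\lambda\ge1$, and sharper versions for sub-collections).

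What remains is a lower bound on the $|V_i|$ and the bookkeeping that produces $\tfrac{p-3}{2}$. For the former, a transposition argument shows that interchanging two terms of $U_i$ with $a$-parts $a,a'$ changes the $b$-coordinate of the product by a nonzero unit multiple of $a-1$ or of $a'-1$, except when $U_i$ is itself product-one --- which is excluded once $r\ge2$; iterating, $|V_i|$ grows (at least linearly) with $|B_i|$, so that a piece of large size forces $|S_0|$ and the other pieces to be negligible and reduces matters to the case where every $B_i$ has size $2$, i.e.\ $S_1$ is a union of pairs $\{a,a^{-1}\}$ and of pairs of reflections. One then counts: there are only $\tfrac{p-3}{2}$ distinct unordered pairs $\{a,a^{-1}\}$ with $a\ne\pm1$; the reflection part is controlled by $e(\mathbf{D}_{2p})=p$ via $\thmref{dihdic}$; and repeated pairs are ruled out, or limited, by combining (i) with the accumulated Cauchy--Davenport constraints. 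Together these give $|S|\le(p-1)+\tfrac{p-1}{2}=p+\tfrac{p-3}{2}$, and the bound for $f$ follows from $f(\hol(\Z_p))\le e(\hol(\Z_p))$.

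The step I expect to be the main obstacle is exactly this last one: bounding the number $r$ of pieces --- equivalently $|S_1|$ --- and simultaneously controlling the interplay of the terms with mutually inverse $a$-parts, the reflection terms, and the translation part $S_0$. It is here that the estimate loses precision relative to the conjectured value $p$ (known for $p\le5$); accordingly one should not expect the bound $p+\tfrac{p-3}{2}$ to be close to optimal.
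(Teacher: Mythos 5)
Your reduction to bounding the length of a minimal product-one sequence is the same as the paper's, and your idea of factoring the non-translation part over the quotient $\Z_{p-1}$ and applying Cauchy--Davenport to the sets of realizable $b$-coordinates is in the spirit of the paper's \lemref{bass1} and \lemref{usefulbass}. But as written the argument has genuine gaps, one of which is a false statement. A product of terms of $S_1$ lands in $N$ exactly when the corresponding sub-multiset of $\overline{S_1}$ is zero-sum in $\Z_{p-1}$, and such a sub-multiset need \emph{not} be a union of whole blocks of your chosen factorization into minimal zero-sum pieces (for $\overline{S_1}=(1,-1,1,-1)$ with $B_1$ the first two terms, the first and fourth terms form a zero-sum subsequence crossing the blocks). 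So minimality of $S$ rules out more subsequences than your $V_i$-sumset analysis sees, and conversely your analysis does not account for all product-one subsequences it must. More seriously, the two steps that actually produce the number $\tfrac{p-3}{2}$ are asserted rather than proved: (a) the reduction to the case where every $B_i$ has size $2$ --- a single large minimal zero-sum piece in $\Z_{p-1}$, e.g.\ $p-1$ equal terms, is entirely possible and is in fact the configuration on which the paper spends most of its effort; and (b) the exclusion or limitation of repeated inverse pairs $\{a,a^{-1}\}$ --- nothing in your fact (i) or in Cauchy--Davenport as you have set it up prevents the same pair from recurring with varying $b$-parts. Without (a) and (b) the count does not close: $\tfrac{p-3}{2}$ pairs already contribute $p-3$ elements, so $|S_0|+|S_1|$ is only bounded by roughly $2p$ unless the sumset inequalities are made to interact quantitatively with the pair count, which you have not done. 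You correctly flag this as the main obstacle; it is where the proof actually lives.

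For comparison, the paper takes a different route at precisely this point: it builds $k$ repetition-free boxes from the sequence of $y$-exponents in $\Z_{p-1}$ and applies the DeVos--Goddyn--Mohar theorem with a stabilizer analysis; the outcome is that either a short zero-sum exists in $\Z_{p-1}$ (and \lemref{usefulbass} finishes), or $S$ contains $p+1$ terms sharing a single $y$-exponent coprime to $p-1$. The rest is a case analysis on the $x$-exponents of those terms: $\tfrac{p-1}{2}$ equal pairs squaring to $1$ via $1+g^{(p-1)/2}=0$, two disjoint triples of distinct exponents fed into Lemmas~\ref{3 dist elts lem} and~\ref{inductive lem}, and one explicit $(p-1)$-fold product for the residual configuration. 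To salvage your route you would need a substitute for that DGM/stabilizer step capable of handling large minimal zero-sum pieces and repeated pairs simultaneously.
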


We also find a lower bound for $e(\hol(\Z_p))$ as follows:
\begin{prop}\label{ebound}
    For any prime $p>5$, we have $e(\hol(\Z_p)) \ge p+1$. Also, $e(\hol(\Z_5))=6$.
\end{prop}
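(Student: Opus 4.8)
The plan is to prove the lower bound $e(\hol(\Z_p))\ge p+1$, which will hold for every prime $p\ge 5$, by exhibiting an explicit product-one sequence of length $p+1$ having no proper product-one subsequence, and then to supplement it with a matching upper bound when $p=5$. Throughout, identify $\hol(\Z_p)=\Z_{p-1}\ltimes\Z_p$ with the affine group of $\Z_p$: the element $x^jy^{\pm1}$ of the presentation corresponds to the affine map $z\mapsto g^{\pm1}z+j$, recorded as the pair $(g^{\pm1},j)$; in these coordinates $(a_1,b_1)(a_2,b_2)=(a_1a_2,\,a_1b_2+b_1)$, the normal subgroup $N=\langle x\rangle\cong\Z_p$ is $\{(1,b):b\in\Z_p\}$, and $(a,b)\mapsto a$ is the quotient onto $\hol(\Z_p)/N\cong\Z_p^{\times}\cong\Z_{p-1}$. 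I shall use freely that $\sum_{a\in\Z_p^{\times}}a=0$ for $p$ odd, and that $e(G)$ equals the maximal length of a product-one sequence having no proper product-one subsequence. Writing $h^{[k]}$ for the sequence of $k$ copies of $h$, I propose
\[
S \;=\; (xy)^{[p-2]}\;y\;(x^2y)\;(x^{c}y^{-1})\qquad\bigl(\text{in coordinates: }(g,1)^{[p-2]}\,(g,0)\,(g,2)\,(g^{-1},c)\bigr),
\]
of length $p+1$, where $c$ is a residue mod $p$ with $-gc\not\equiv 0,1,2\pmod p$; such $c$ exists since the excluded set $\{0,-g^{-1},-2g^{-1}\}$ has only three elements while $p\ge 5$.

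First I would check that $S$ is a product-one sequence. Order it with $(g^{-1},c)$ first, followed by the $p$ remaining elements (all of the form $(g,\cdot)$) in an order still to be chosen. Then the $a$-coordinate of the product is $g^{-1}g^{p}=g^{p-1}=1$ automatically, while the $b$-coordinate equals $c+g^{-1}+\gamma_2-\gamma_0$, where $\gamma_0$ and $\gamma_2$ are the coefficients multiplying the $b$-entries of the terms $(g,0)$ and $(g,2)$ — these coefficients are powers of $g$, and as the position runs over the $p$ later slots they range over $\Z_p^{\times}$ with the value $g^{-1}$ occurring twice; here $\sum_{a}a=0$ has been used to eliminate the contribution of the $p-2$ copies of $(g,1)$. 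Since every nonzero residue is available as one of these coefficients and $c+g^{-1}\ne 0$ (because $c\ne-g^{-1}$), one may choose $\gamma_0\ne c+g^{-1}$ and then set $\gamma_2=\gamma_0-c-g^{-1}$, which is nonzero and distinct from $\gamma_0$; with the corresponding ordering the $b$-coordinate is $0$, so $1\in\Pi(S)$.

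Next I would check that $S$ has no proper product-one subsequence. Any product-one subsequence projects under $(a,b)\mapsto a$ to a zero-sum subsequence of the length-$(p+1)$ sequence $(1,\dots,1,-1)$ in $\Z_{p-1}$; the only such are the empty sequence, $S$ itself, the subsequences obtained by deleting a single term $(g,\cdot)$, and the two-term subsequences consisting of one $(g,\cdot)$ together with $(g^{-1},c)$. So only the last two families need to be excluded. A deletion leaves $p-1$ elements all of the form $(g,\cdot)$; the product of any ordering of them has $a$-coordinate $1$ and, by the same use of $\sum_a a=0$, $b$-coordinate equal to a nonzero residue of one of the shapes $\gamma$, $-\gamma$, or $\gamma'-\gamma''$ (with $\gamma'\ne\gamma''$), hence never $0$. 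A two-term subsequence $\{(g,t),(g^{-1},c)\}$ with $t\in\{0,1,2\}$ has product $(1,\ast)$ in either order, with $\ast=0$ exactly when $t=-gc$, excluded by the choice of $c$. Hence $S$ is a minimal product-one sequence and $e(\hol(\Z_p))\ge|S|=p+1$.

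This already gives $e(\hol(\Z_5))\ge 6$. For the reverse inequality I would show that $\hol(\Z_5)$ has no product-one sequence of length $>6$ that is free of proper product-one subsequences; since such a sequence has length at most $d(\hol(\Z_5))$, which is small, this amounts to a finite case analysis, conveniently organized by the projection onto $\hol(\Z_5)/N\cong\Z_4$ — whose Davenport constant $4$ forces enough zero-sum subsequences in the projection that, on lifting, one of the corresponding proper sub-blocks is forced to have product equal to the identity of $N\cong\Z_5$, a contradiction — in the spirit of the argument proving \thmref{holozp}. Together with the lower bound this yields $e(\hol(\Z_5))=6$. The step I expect to be the main obstacle is the product-one verification: determining the correct interleaving of the terms of $S$ and pushing the computation of the $b$-coordinate through, where the role of the auxiliary value $c$ and of the identity $\sum_{a\in\Z_p^{\times}}a=0$ becomes essential; the minimality check is comparatively mechanical once the admissible sub-blocks are identified, and the $p=5$ upper bound, while finite, requires some care in the enumeration.
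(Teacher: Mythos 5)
Your lower bound argument is correct and is a genuinely different construction from the paper's. The paper uses the sequence $(y, xy, \ldots, xy, x^{a}y^{\frac{p-1}{2}}, x^{b}y^{\frac{p-1}{2}})$ with $xy$ repeated $p-2$ times, whose projection to $\Z_{p-1}$ is $(1,\ldots,1,\tfrac{p-1}{2},\tfrac{p-1}{2})$; the delicate point there is choosing $a\neq b$ outside the set $C$ of exponents reachable from the length-$\frac{p-1}{2}$ blocks, which requires counting $|C|=\frac{p+1}{2}$. Your sequence $(g,1)^{[p-2]}(g,0)(g,2)(g^{-1},c)$ projects instead to $(1,\ldots,1,-1)$, so the admissible proper sub-blocks are only the length-$(p-1)$ and length-$2$ ones, and both exclusions reduce to transparent computations with the coefficients $1,g,\ldots,g^{p-2}$ and the single congruence $-gc\not\equiv 0,1,2$. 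I checked the three computations: the product-one ordering of $S$ (the coefficient multiset on the $p$ trailing slots is indeed $\Z_p^{\times}$ with $g^{-1}$ doubled, and $\gamma_0\neq c+g^{-1}$ can be arranged since $p-1\ge 4$); the non-vanishing of $\gamma$, $-\gamma$, $\gamma'-\gamma''$ for the length-$(p-1)$ blocks; and the two-term products. All are sound, and your construction works uniformly for all $p\ge 5$. (One wording slip: the relevant zero-sum projections of length $p-1$ are obtained by deleting one $(g,\cdot)$ term \emph{and} the $(g^{-1},c)$ term, not a single term; your verification treats the correct family, so nothing is affected.)

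The gap is in the second half of the statement, $e(\hol(\Z_5))\le 6$. You reduce it to "a finite case analysis" over minimal product-one sequences of length $7$ or $8$ in a group of order $20$, but you do not carry it out, and the reduction as described does not by itself produce the contradiction. The paper's route is short and you should be able to supply it: length $8$ is impossible because $f(\hol(\Z_5))=5<8=d(\hol(\Z_5))$, so any product-one sequence of length $8$ already has a proper product-one subsequence of length $\le 5$. For length $7$, if $S$ is minimal then $S_y\subset\Z_4$ can have no zero-sum subsequence of length $\le 2$ (otherwise the complementary block $S'$ has $|S'|\ge 5$ and $\Pi(S')\subseteq\langle x\rangle$, and \lemref{usefulbass}(a) yields a proper product-one subsequence); this forces $S_y$ to avoid $0$, to contain at most one $2$, and not to contain both a $1$ and a $3$, hence to contain six copies of $1$ or six copies of $3$ — and six elements of the form $x^{a_i}y^{\pm 1}$ always admit a product-one subsequence of length $4$ by \textit{Case I} of the proof of \propref{holz5}. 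Without some such argument, the claimed equality $e(\hol(\Z_5))=6$ is not established by your proposal.
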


This allows us to conclude that for $G=\hol(\Z_5)$, we have $f(G)=5$, $e(G)=6$, and $d(G)=8$; so they are all distinct.

In the course of our proofs, we obtain the following result, which provides an upper bound for the Davenport constant of a group $G$ in terms of a normal subgroup $H$ and the quotient $G/H$.

\begin{prop}\label{extension}
Given a short exact sequence of finite groups 
\[
1 \rightarrow H \rightarrow G \stackrel{\phi}{\rightarrow} K
\rightarrow 1,
\]
one has, $d(G) \leq (d(H)-1)f(K) + d(K)$. Further, if equality holds, then $f(G) \leq f(H) f(K)$.
\end{prop}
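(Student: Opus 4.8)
The plan is to push a long sequence in $G$ down to $K$ via $\phi$, repeatedly harvest short product-one subsequences there, lift each harvested block back to an element of the kernel $H$, and then apply the Davenport constant (resp.\ the constant $f$) of $H$ to the harvested elements.

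For the inequality I would take a sequence $S=(a_1,\dots,a_N)$ in $G$ with $N=(d(H)-1)f(K)+d(K)$ and exhibit a product-one subsequence. Working in $\phi(S)$ over $K$, I extract product-one subsequences greedily: as long as at least $d(K)$ terms of $\phi(S)$ remain unused, the definition of $f(K)$ (applied to any length-$d(K)$ sub-block of the unused terms) produces a product-one subsequence of those terms of length at most $f(K)$; pull it out and repeat. Since each step deletes at most $f(K)$ terms and $N-(d(H)-1)f(K)=d(K)$, this yields at least $d(H)$ pairwise disjoint product-one subsequences $T_1,\dots,T_r$ of $\phi(S)$. Each $T_i$ is $\phi(\widetilde T_i)$ for a subsequence $\widetilde T_i$ of $S$ (the $\widetilde T_i$ pairwise disjoint), and from $1\in\Pi(T_i)$ one obtains an internal ordering of $\widetilde T_i$ whose $G$-product $h_i$ lies in $H=\ker\phi$. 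Then $(h_1,\dots,h_{d(H)})$ is a length-$d(H)$ sequence in $H$, hence has a product-one subsequence: indices $j_1,\dots,j_s$ and an outer ordering $\tau$ with $h_{j_{\tau(1)}}\cdots h_{j_{\tau(s)}}=1$. Concatenating the blocks $\widetilde T_{j_{\tau(1)}},\dots,\widetilde T_{j_{\tau(s)}}$, each kept in its internal ordering, gives a subsequence of $S$ whose ordered product equals $h_{j_{\tau(1)}}\cdots h_{j_{\tau(s)}}=1$, so $d(G)\le N$.

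For the refinement I would assume $d(G)=(d(H)-1)f(K)+d(K)$ and start from an arbitrary sequence $S$ in $G$ of length $d(G)$. The same extraction applies verbatim: because $d(G)-(d(H)-1)f(K)=d(K)$, the sequence is still just long enough to produce $r\ge d(H)$ disjoint product-one subsequences of $\phi(S)$ of length $\le f(K)$, hence kernel elements $h_1,\dots,h_{d(H)}\in H$ as before. Now I would apply $f(H)$ in place of $d(H)$: the length-$d(H)$ sequence $(h_1,\dots,h_{d(H)})$ admits a product-one subsequence $h_{j_1},\dots,h_{j_s}$ with $s\le f(H)$, and the associated concatenation of blocks is a product-one subsequence of $S$ of length $\sum_t|\widetilde T_{j_t}|\le s\,f(K)\le f(H)f(K)$. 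This is exactly where the equality hypothesis is used: it guarantees that a sequence of length $d(G)$ is still long enough to run the extraction, which would fail if $d(G)$ were strictly smaller.

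I expect the main obstacle to be the bookkeeping forced by non-commutativity rather than anything deep: a product-one subsequence in $K$ lifts only to an element of $H$ once one fixes a correct internal ordering of its $\phi$-preimage, and when the kernel elements $h_i$ admit a product-one rearrangement in $H$ one must concatenate the corresponding $G$-blocks in that outer order, each block frozen in its own internal order, to recover a genuine product-one subsequence of $S$. I would also make explicit the elementary remark that $f(K)$ bounds the shortest product-one subsequence not merely for sequences of length exactly $d(K)$ but for all sequences of length $\ge d(K)$, which follows by restricting to a length-$d(K)$ sub-block. Beyond these points, the proof is just the counting displayed above.
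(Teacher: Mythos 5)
Your proposal is correct and follows essentially the same route as the paper: push $S$ to $K$, greedily extract disjoint product-one subsequences of $\phi(S)$ of length at most $f(K)$, count to get at least $d(H)$ of them, lift each block to an element of $H=\ker\phi$, apply $d(H)$ (resp.\ $f(H)$ under the equality hypothesis) to these kernel elements, and concatenate the corresponding blocks. The bookkeeping points you flag (internal versus outer orderings, and that $f(K)$ applies to any sequence of length at least $d(K)$ by restricting to a length-$d(K)$ sub-block) are exactly the ones implicit in the paper's argument.
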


We also establish a bound on the $k$-th Davenport constant of $G$ in terms of $f(G)$ and $d(G)$.
\begin{prop}\label{dkintermsoff}
For any finite group $G$ and positive integer $k$, we have $d_k(G) \leq (k-1) f(G) + d(G)$.
\end{prop}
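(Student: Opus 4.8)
We want to show $d_k(G) \le (k-1)f(G) + d(G)$. The plan is to argue by induction on $k$. The base case $k=1$ is exactly the definition of $d(G)$, since $(1-1)f(G) + d(G) = d(G)$, and by definition any sequence of length $d(G)$ contains a product-one subsequence.

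For the inductive step, suppose the bound holds for $k-1$, i.e. any sequence of length $(k-2)f(G)+d(G)$ contains $k-1$ disjoint product-one subsequences. Let $S$ be a sequence of length $(k-1)f(G)+d(G)$. Since this length is at least $d(G)$, the sequence $S$ contains a product-one subsequence; moreover, by the definition of $f(G)$, we can extract one of length at most $f(G)$ — here I need to be slightly careful, since the definition of $f(G)$ is phrased for sequences of length exactly $d(G)$. To handle this cleanly, first pass to a subsequence $S_0 \subseteq S$ of length exactly $d(G)$ (any such subsequence works), apply the definition of $f(G)$ to get a product-one subsequence $T_1 \subseteq S_0 \subseteq S$ with $|T_1| \le f(G)$. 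Then delete $T_1$ from $S$: the remaining sequence $S \setminus T_1$ has length at least $(k-1)f(G) + d(G) - f(G) = (k-2)f(G) + d(G)$, so by the inductive hypothesis it contains $k-1$ disjoint product-one subsequences $T_2, \ldots, T_k$. Together with $T_1$, these give $k$ disjoint product-one subsequences of $S$, completing the induction.

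The only genuinely delicate point is the observation that $f(G)$, although defined via sequences of length exactly $d(G)$, still yields a short product-one subsequence inside \emph{any} sequence of length $\ge d(G)$ — which is immediate by restricting to a length-$d(G)$ subsequence as above. Beyond that, the argument is a routine greedy/peeling induction, and I do not anticipate any real obstacle. (One could alternatively phrase the whole thing non-inductively: greedily peel off product-one subsequences of length $\le f(G)$; as long as at least $d(G)$ terms remain, one more can be peeled, so after removing fewer than $(k-1)$ such subsequences we have used at most $(k-1)f(G)$ terms and still have at least $d(G)$ terms left to extract a final one — but the induction is cleaner to write.)
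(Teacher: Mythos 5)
Your proof is correct and is essentially the same peeling argument as the paper's, which runs the non-inductive greedy version you sketch in your closing parenthesis: remove disjoint product-one subsequences of length at most $f(G)$ until no more can be found, note the remainder then has length less than $d(G)$, and count to conclude $t \ge k$. Your explicit observation that $f(G)$, though defined via sequences of length exactly $d(G)$, yields a short product-one subsequence in any sequence of length at least $d(G)$ (by restricting to a length-$d(G)$ subsequence) is a point the paper leaves implicit.
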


Using this proposition, we compute the $k$-th Davenport constant for several nonabelian groups. In particular, we show that $d_k(\mathbf{D}_{2n}) = nk + 1$, $d_k(\mathbf{Q}_{4n}) = 2nk + 1$ and $d_k(\Z_p\ltimes\Z_n) = nk + p -1$.

\subsection{Outline of the paper}\label{outline}
To describe the contents of the paper, we start by evaluating $f(G)$ and $e(G)$ for dihedral and dicyclic groups in \secref{f-const dih dicy gp}. 
In \secref{f-const pq group sec}, we establish several useful lemmas and prove \thmref{fconstgeneraln} in the special case where $n$ is also a prime. In Section \ref{f, d-const pn gp sec}, we evaluate$f(G)$ and $d(G)$ for $G = \mathbb{Z}_{p} \ltimes \mathbb{Z}_{n}$ with prime $p$ and general $n$ (which may or may not be divisible by $p$). This improves the result of Qu and Li (\cite{QL23}) by removing the assumption that $p$ is smaller than every prime dividing $n$. In Section \ref{hol bound sec}, we obtain the upper bound for $e(\hol(\mathbb{Z}_p))$ (and therefore, for $f(\hol(\mathbb{Z}_p))$) for primes $p>5$. We also prove in that section the equality $f(\hol(\mathbb{Z}_p))=p$ for the primes $3,5$. In the last section, using $f(G)$, we obtain potentially better bounds for $d_k(G)$ than what is known. This allows us to compute $d_k(G)$ for the groups $\mathbf{D}_{2n}, \mathbf{Q}_{4n}$ and $\Z_p \ltimes\Z_n$, where $p$ is a prime.

\vskip 5mm

\section{Computing $f(\mathbf{D}_{2n})$ and $f(\mathbf{Q}_{4n})$}\label{f-const dih dicy gp}

 In this section, we prove \thmref{dihdic}, i.e., we show that if $G$
is the dihedral group $\mathbf{D}_{2n} = \langle x ,y : x^n=y^2=1,
yxy^{-1}=x^{-1} \rangle $ of order $2n$, or the dicyclic group $\mathbf{Q}_{4n} = \langle x,y : x^{2n}=1, x^n=y^2, yxy^{-1}=x^{-1} \rangle$ of order $4n$, then $f(G)=e(G)=d(G)-1$. In both cases, we denote $H = \langle x \rangle$ and note that $\ord(x) = d(G)-1$.

\begin{proof}[\textbf{Proof of \thmref{dihdic}}]
As $\ord(x)=d(G)-1$ in both cases, we have
\begin{align*}
d(G) \geq e(G) \geq f(G) \geq d(G)-1.
\end{align*}
Hence, it is enough to show $e(G)\neq d(G)$. Suppose this is not true;
then there is a minimal product-one sequence, say $S$, of length
$d(G)$. Since $S$ is minimal, deleting any element $S$ creates a
$d(G)-1$ length sequence which is product-one-free, i.e., an
\textit{extremal product-one-free} sequence in $G$.

\vspace{2mm} We discuss \textit{(a)} first where $G=\mathbf{D}_{2n}$. In
this case $|S|=n+1$. If all $n+1$ elements are from $G \setminus H$, then some $x^{i}y$ is present at least twice. Then, we are done because $x^{i}y\cdot x^{i}y=1$. In the contrary case, there exists some $x^{a} \in S \cap H$. Thus, $S \setminus (x^{a})$ is an extremal product-one-free sequence of $\mathbf{D}_{2n}$. We consider
two cases:

\vspace{2mm}
\noindent \textit{Case I : ($n >3$)}\\
By \cite[Theorem~1.3]{MR18}, $S \setminus (x^{a})$ must then consist  of $n-1$ copies of $x^{t}$ for some $t$ coprime to $n$, and one copy of $x^{s}y$. So, $S=(x^{a}, x^{s}y, x^{t}, \ldots, x^{t})$. But then $\Pi(S) \cap H =\emptyset$, since any order of multiplying the elements will produce an element of the form $x^{c}y$; this is a contradiction to the assumption.

\vspace{2 mm}
\noindent \textit{Case II : ($n=3$)}\\
Once again, by \cite[Theorem~1.3]{MR18}, $S \setminus (x^{a})$ must be of the form $(x^{t}, x^{t}, x^{u}y)$ or $(y, xy, x^{2}y)$. In both cases, we observe that $\Pi(S) \cap H =\emptyset$ as each way of multiplying produces a term of the form $x^cy$, which leads to a contradiction. This completes the proof of the group $\mathbf{D}_{2n}$.

\vspace{2mm} Now, we consider the dicyclic group $G=\mathbf{Q}_{4n}$ of \textit{Case (b)}. Let $h$ be an element of $S$. Once again, we have two cases according to whether $n>2$ or not.

\vspace{2mm}
\noindent \textit{Case I : ($n >2$)}\\
By \cite[Theorem~1.4]{MR18}, one obtains $S \setminus (h)= (x^{s}y, x^{t}, \ldots, x^{t})$, for some integers $s$ and $t$ such that $\gcd(t, 2n)=1$ and $x^t$ appears $2n-1$ times. Now, if $h=x^{a}$ for some $a$, then $S \setminus (x^{s}y)$ cannot be extremal
product-one-free, leading to a contradiction. Also, if $h=x^{b}y$
for some $b$, then $S \setminus (x^{t})$ cannot be extremal
product-one-free. Therefore, in both cases, we obtain a
contradiction.

\vspace{2 mm}
\noindent \textit{Case II : ($n =2$)}\\
Again, by \cite[Theorem~1.4]{MR18}, the extremal product-one-free sequences $S \setminus
(h)$ of $G$ is of the form $(x^{t}, x^{t}, x^{t}, x^{s}y)$ or $(x^{t}, x^{s}y, x^{s}y, x^{s}y)$ or $(x^{s}y, x^{s}y, x^{s}y, x^{s+t}y)$, where $2 \nmid t$. It is not difficult to check that adding a fifth element to any of these sequences will not produce a minimal product-one sequence, leading again to a contradiction. This completes the proof for $\mathbf{Q}_{4n}$ also.
\end{proof}

\vspace{5 mm}

\section{Determining $f(\mathbb{Z}_{p} \ltimes \mathbb{Z}_{q})$ for primes $p,q$} \label{f-const pq group sec}

In this section, we prove \thmref{fconstgeneraln} in the special case where $n$ is a prime as well. We state this result as a proposition.
\begin{prop}\label{f-const pq group prop}
Let $p, q$ be odd primes. Then, we have $f(\mathbb{Z}_{p} \ltimes \mathbb{Z}_{q})=q$.
\end{prop}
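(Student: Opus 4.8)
The plan is to prove the two inequalities $f(G)\ge q$ and $f(G)\le q$ separately, where $G=\Z_p\ltimes\Z_q=\langle x,y\mid x^q=y^p=1,\ x^gy=yx\rangle$. Throughout I use the normal subgroup $H=\langle x\rangle\cong\Z_q$, with $G/H\cong\Z_p$; the fact, forced by $\ord_q(g)=p$, that $p\mid q-1$, so in particular $q>p$; and the value $d(G)=p+q-1$ from \cite{JB07}. For the lower bound I would exhibit a sequence $S$ of length $d(G)=p+q-1$ all of whose product-one subsequences have length $q$: take $S$ to be $q$ copies of $x$ together with $p-1$ copies of $y$. If $T\subseteq S$ has $i$ copies of $x$ and $j$ copies of $y$ with $1\le i\le q$ and $0\le j\le p-1$, then the image of $T$ in $G/H\cong\Z_p$ is $j$, so $T$ being product-one forces $j\equiv0\pmod p$, hence $j=0$; then $T$ is $i$ copies of $x$ with product $x^i$, forcing $i=q$. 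Thus the only product-one subsequence of $S$ is the block of $q$ copies of $x$, giving $f(G)\ge q$.

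For the upper bound, let $S$ be an arbitrary sequence over $G$ with $|S|=d(G)=p+q-1$; I must produce a product-one subsequence of length at most $q$. There are two easy reductions. First, if $|S\cap H|\ge q$, then since $d(\Z_q)=q$ the subsequence $S\cap H$ contains a zero-sum (hence product-one) subsequence of length $\le q$ inside the abelian group $H$, and we are done; so assume $|S\cap H|\le q-1$, whence $S$ has at least $p$ terms outside $H$. Second, if some element $a=x^cy^d\in G\setminus H$ occurs at least $p$ times in $S$, then $p$ copies of $a$ form a product-one subsequence, because $a^p=x^{c(1+g^d+\cdots+g^{(p-1)d})}y^{pd}=1$: indeed $y^{pd}=1$, and $g^d$ is a primitive $p$-th root of unity modulo $q$ (as $d\not\equiv0\pmod p$ and $\ord_q(g)=p$), so $1+g^d+\cdots+g^{(p-1)d}=(g^{pd}-1)/(g^d-1)\equiv0\pmod q$; this subsequence has length $p\le q$. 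So we may also assume no element outside $H$ repeats $p$ or more times.

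The core of the argument is then to combine the $\ge p$ terms of $S$ outside $H$ with the terms inside $H$. I would first extract from the outside-$H$ terms a subsequence $T_1$ whose image-sum in $\Z_p$ is $0$, chosen minimal so that $|T_1|\le p$ (possible since their images give at least $p=d(\Z_p)$ nonzero elements of $\Z_p$); then $\Pi(T_1)\subseteq H$ and the set $W:=\Pi(T_1)$ is nonempty. The key elementary observation is that appending terms of $S\cap H$ at the tail of a product of $T_1$ just adds them, since no conjugation occurs at the end: thus for any subsequence $U$ of $S\cap H$, the concatenation $T_1U$ is product-one as soon as $\sum_{u\in U}u\in-W$ in $\Z_q$. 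So it suffices to find $U\subseteq S\cap H$ with $|U|\le q-|T_1|$ and $\sum_{u\in U}u\in-W$. This is where the lemmas of the section are meant to enter: they are used to control, on one side, the size and shape of $W=\Pi(T_1)$ for a well-chosen $T_1$ (for instance, when many outside-$H$ terms lie in a common coset $Hy^d$ one can force $1\in W$ or make $W$ very large), and, on the other side, how few terms of $S\cap H$ are needed to hit the target coset $-W$, exploiting that $|S\cap H|=p+q-1-|S\setminus H|$ is correspondingly constrained. The argument splits according to how the outside-$H$ terms distribute among the $p-1$ nontrivial cosets of $H$, and according to the structure of $S\cap H$ (a single repeated value versus a value set generating $\Z_q$, etc.).

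The step I expect to be the main obstacle is exactly this simultaneous length control: guaranteeing $|T_1|+|U|\le q$. The difficulty is genuine because $\Pi(T_1)$ need not contain $1$ and can be small — e.g. $(xy,x^2y,x^3y)$ is already product-one-free in $\Z_3\ltimes\Z_7$ — so one cannot hope to win using outside-$H$ terms alone; yet if $|T_1|$ is forced up to $p$, only $q-p$ terms of $H$ remain to spend, and conversely when $S\cap H$ is large (near $q-1$) there are fewer outside-$H$ terms, so $T_1$ is tightly constrained and $W$ must be pinned down precisely. Balancing the two regimes — few outside-$H$ terms versus few inside-$H$ terms — is the heart of the proof, and is precisely what the auxiliary lemmas of this section are designed to resolve.
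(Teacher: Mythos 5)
Your lower bound and your two opening reductions are fine (the sequence of $q$ copies of $x$ and $p-1$ copies of $y$ does force length $q$, and $(x^cy^d)^p=1$ for $d\not\equiv 0\bmod p$ is exactly the identity the paper also uses). But the proposal has a genuine gap: everything after those reductions is a plan, not a proof, and you say so yourself ("this is where the lemmas of the section are meant to enter", "the step I expect to be the main obstacle"). The content of the proposition lies precisely in that remaining case, and your proposed backbone does not survive it. After assuming $|S\cap H|\le q-1$ and no outside-$H$ element repeated $p$ times, the subsequence $S\cap H$ can be empty or consist of a few copies of a single $x^a$, so there is in general no $U\subseteq S\cap H$ hitting the target set $-W$ at all; in that situation one must produce $1\in\Pi(T)$ using outside-$H$ terms alone, and since $\Pi(T_1)$ for a minimal $T_1$ need not contain $1$ (your own example $(xy,x^2y,x^3y)$ in $\Z_3\ltimes\Z_7$), the claimed sufficiency criterion gives you nothing in exactly the hard regime. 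So the argument as written cannot be completed along the announced lines without importing the missing machinery wholesale.

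For comparison, the paper resolves this differently and entirely without the "complete inside $H$" step. It splits on the multiplicities of the $y$-exponents $S_y\subseteq\Z_p$. If no value occurs more than $q$ times, it partitions $S_y$ into $q$ repeat-free blocks and applies Cauchy--Davenport to get a subsequence of length exactly $q$ whose product lies in $H$; a Bass-type lemma (if $|T|\ge q$ and $\Pi(T)\cap H\neq\emptyset$ then $T$ has a product-one subsequence, proved via minimal decompositions, the cyclic-shift argument, and Cauchy--Davenport) then yields a product-one subsequence of length $\le q$. If some nonzero value occurs more than $q$ times (the $b=0$ case being your first reduction), one has at least $q+1\ge 2p+2$ terms in a single coset $Hy$, and the two auxiliary lemmas (on products of three distinct elements and the inductive one-element-extension lemma) are used to show the product sets of carefully built blocks grow linearly, so that Cauchy--Davenport over the blocks forces a product-one subsequence of total length $q-1$; this requires separate treatment of $p=3$, $p=5$, and $p>5$. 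That case analysis is the heart of the proof and is exactly what your proposal leaves open.
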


Before beginning with the proof, we recall some useful facts from Bass (\cite{JB07}) which we state in the form we need, as two lemmata. As they are extracted from within his paper, we also give quick proofs. Bass considers the groups
\[
G= \Z_n\ltimes\Z_q = \left\langle x, y\mid x^q
= y^n = 1, x^g y = yx\right\rangle,
\]
where $q$ is a prime and ${\rm
ord} _q(g)=n$. Let $H = \langle x\rangle\cong \Z_q$ and $H^* =
H\setminus\{1\}$. The following statement has been extracted from \cite[Lemma~14]{JB07}.
\begin{lemma}{\cite{JB07}}\label{bass1}
    Let $S$ be a nonempty sequence of elements of $G$ which is minimal with respect to the property that $\Pi(S)\cap H \neq \emptyset$. Then, $\Pi(S)\subset H$ and either $1 \in \Pi(S)$ or $|\Pi(S)| \ge |S|$.
\end{lemma}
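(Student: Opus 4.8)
The plan is to pass to the abelian quotient $G/H \cong \Z_n$ and then run a cyclic-rotation argument inside $H$. Fix a labelling $S = (s_1, \dots, s_m)$, write each element in normal form $s_i = y^{a_i} x^{b_i}$ with $0 \le a_i < n$, $0 \le b_i < q$, and recall from the presentation that $y x y^{-1} = x^g$, so that conjugating a power of $x$ by $y^a$ multiplies its exponent by $g^a \bmod q$. Since $G/H \cong \Z_n$ is abelian, the image of any ordered product $s_{\sigma(1)}\cdots s_{\sigma(m)}$ in $G/H$ equals $a_1 + \dots + a_m \bmod n$, independently of $\sigma$; in particular, because $\Pi(S)\cap H \neq \emptyset$ we get $a_1 + \dots + a_m \equiv 0 \pmod n$, and then \emph{every} ordered product has trivial image in $G/H$, i.e.\ $\Pi(S) \subset H$.

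Next I would use minimality of $S$ to control $(a_1, \dots, a_m)$ as a sequence over $\Z_n$. If some proper nonempty $I \subsetneq \{1, \dots, m\}$ satisfied $\sum_{i \in I} a_i \equiv 0 \pmod n$, then by the previous paragraph the corresponding proper subsequence $T$ of $S$ would satisfy $\Pi(T) \subset H$, contradicting minimality. Hence $(a_1, \dots, a_m)$ is a minimal zero-sum sequence over $\Z_n$; in particular the partial sums $\alpha_k := a_1 + \dots + a_k$, for $k = 0, 1, \dots, m-1$, are pairwise distinct modulo $n$, since for $0 \le k < l \le m-1$ the difference $\alpha_l - \alpha_k = \sum_{i=k+1}^{l} a_i$ is a sum over the proper nonempty index set $\{k+1, \dots, l\}$.

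Now set $P_0 := s_1 s_2 \cdots s_m = x^c \in H$. If $c \equiv 0 \pmod q$ then $1 \in \Pi(S)$, giving the first alternative. Otherwise, for $k = 0, 1, \dots, m-1$ consider the cyclic rotation $P_k := s_{k+1} \cdots s_m\, s_1 \cdots s_k \in \Pi(S)$; writing $w_k := s_1 \cdots s_k = y^{\alpha_k} x^{\beta_k}$ one has $P_k = w_k^{-1} P_0 w_k$, and since conjugation by $x^{\beta_k}$ fixes $H$ pointwise this reduces to conjugation by $y^{\pm\alpha_k}$, yielding $P_k = x^{c g^{-\alpha_k}}$ (the sign in the exponent depends only on orientation conventions and is immaterial). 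Because $\ord_q(g) = n$, the map $k \bmod n \mapsto g^k \bmod q$ is injective, so the distinctness of the $\alpha_k$ modulo $n$, combined with $c \not\equiv 0 \pmod q$, shows that $P_0, \dots, P_{m-1}$ are $m$ distinct elements of $\Pi(S)$. Hence $|\Pi(S)| \ge m = |S|$, the second alternative.

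The normal-form computations and the conjugacy identity $P_k = w_k^{-1} P_0 w_k$ are routine; the step I expect to do the real work is the use of minimality to force the partial sums $\alpha_0, \dots, \alpha_{m-1}$ to be distinct modulo $n$, since that is exactly what upgrades ``$m$ cyclic rotations of one product'' into ``$m$ distinct products''. I do not anticipate a genuine obstacle beyond this bookkeeping.
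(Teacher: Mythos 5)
Your proof is correct and takes essentially the same route as the paper: both deduce $\Pi(S)\subset H$ from the abelian quotient $G/H\cong\Z_n$ and then show that the $|S|$ cyclic rotations of one fixed ordered product are pairwise distinct elements of $\Pi(S)$, with minimality supplying the distinctness. The only difference is bookkeeping: you compute each rotation explicitly as $x^{cg^{-\alpha_k}}$ and use that minimality forces the partial sums $\alpha_k$ to be distinct mod $n$, while the paper reaches the same conclusion by contradiction via Lemma~\ref{bass2} (equal rotations would give commuting blocks $h_1h_2=h_2h_1\in H^{*}$, hence a proper subsequence with product in $H$); the underlying conjugation computation is identical.
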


\begin{proof}
    Assume that $1\notin \Pi(S)$. Note that, since $G/H \cong \Z_n$ is abelian, either $\Pi(S)\subset H$ or $\Pi(S)\cap H = \emptyset$.

    Now, let $|S| = t$ and $S = (s_1, s_2, \ldots , s_t)$. Define the products
    \[\pi_i(S) = s_{i+1}s_{i+2}\cdots s_t s_1\cdots s_{i-1}s_i,\ \forall \ 0\le i\le t-1.\]
    Let $\pi_i(S) = \pi_j(S)$ where $i<j$. Then, let $s_{i+1}\cdots s_{j} = h_1$ and $s_{j+1}\cdots s_{t}s_{1}\cdots s_{i} = h_2$.

    By our assumption $h_1 h_2 = h_2 h_1 \in H^{*}$. Therefore, by the next lemma, we will have $h_1 = s_{i+1}\cdots s_{j} \in H$, which contradicts the minimality of $S$. Therefore, $\pi_i(S)$ are all distinct. Since there are $t$ many of them, and they are all elements of $\Pi(S)$, we get that $|\Pi(S)|\ge t = |S|$.
\end{proof}

\begin{lemma}\label{bass2}
    Let $h_1 = x^a y^b$ and $h_2 = x^c y^d$ such that $h_1 h_2 = x^k \in H^{*}$. Then, $h_2 h_1 = x^{g^d k}$. In particular, if $h_1 h_2 = h_2 h_1 \in H^{*}$, then $h_1, h_2 \in H$.
\end{lemma}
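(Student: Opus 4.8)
The plan is to reduce the whole statement to one explicit normal-form computation inside $G$, using only the defining relation. First I would rewrite $x^g y = yx$ in conjugation form as $yxy^{-1} = x^g$, which iterates to $y^d x^m = x^{g^d m} y^d$ for all integers $m$ and $d$ (the exponent $g^d$ being read modulo $q$, which makes sense since $\ord_q(g) = n$ means $g$ acts on $H\cong\Z_q$). This "push $x$'s past $y$'s" identity is the only group-theoretic input.

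Next I would expand $h_1 h_2 = x^a y^b x^c y^d$, move $y^b$ past $x^c$, and obtain $h_1 h_2 = x^{a + g^b c}\, y^{b+d}$. Comparing with the hypothesis $h_1 h_2 = x^k \in H^\ast$ forces two numerical conditions: $b + d \equiv 0 \pmod n$, so that the $y$-part disappears, and $a + g^b c \equiv k \pmod q$. Then I would compute $h_2 h_1 = x^c y^d x^a y^b = x^{c + g^d a}\, y^{d+b} = x^{c + g^d a}$, using $b + d \equiv 0 \pmod n$ once more. To identify this exponent with $g^d k$, write $g^d k = g^d a + g^{b+d} c$; since $\ord_q(g) = n$ and $n \mid b+d$, we have $g^{b+d} \equiv 1 \pmod q$, so $g^d k \equiv g^d a + c \pmod q$, which is precisely the exponent of $h_2 h_1$. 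Hence $h_2 h_1 = x^{g^d k}$, proving the first assertion.

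For the "in particular" clause, suppose in addition $h_1 h_2 = h_2 h_1$. Then $x^k = x^{g^d k}$ with $k \not\equiv 0 \pmod q$, so $k$ is invertible modulo the prime $q$ and we may cancel it to get $g^d \equiv 1 \pmod q$. Since $\ord_q(g) = n$, this gives $n \mid d$, hence $y^d = 1$, and then $b + d \equiv 0 \pmod n$ gives $y^b = 1$ as well. Therefore $h_1 = x^a$ and $h_2 = x^c$ both lie in $H$.

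I do not expect a genuine obstacle here: the lemma is a routine manipulation of metacyclic normal forms. The one point requiring care is bookkeeping of the two different moduli — exponents of $x$ live modulo $q$, while exponents of $g$ (equivalently, of $y$ when acting by conjugation) matter only modulo $n = \ord_q(g)$ — and the key cancellation $g^{b+d}\equiv 1\pmod q$ is exactly where the hypothesis $b+d\equiv 0\pmod n$ gets used; conflating these moduli is the only place an error could slip in.
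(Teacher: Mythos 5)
Your proposal is correct and follows essentially the same route as the paper: expand both products in normal form, extract the conditions $b+d\equiv 0\pmod n$ and $k\equiv a+g^b c\pmod q$, and use $g^{b+d}\equiv 1\pmod q$ to identify the exponent of $h_2h_1$ with $g^d k$, then cancel $k$ modulo the prime $q$ to force $n\mid d$. The only (immaterial) difference is at the very end, where you deduce $h_1\in H$ from $n\mid b$ directly, while the paper writes $h_1=(h_2^{-1})(h_2h_1)\in H$.
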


\begin{proof}
    Since $h_1 h_2 = x^k \implies x^k = x^a y^b x^c y^d = x^{a+g^b c} y^{b+d}$. Therefore, we get that $b+d \equiv 0 \pmod{n}$ and $k \equiv a + g^{b} c \pmod{q}$.

    Now, $h_2 h_1 = x^c y^d x^a y^b = x^{c+g^d a}y^{b+d} = x^{c+g^d a}$. Since, $b+d \equiv 0 \pmod{n}$ and ${\rm ord} _q(g) = n$, we get
    \[c+g^d a \equiv g^{d}(a + g^{-d}c) \equiv g^{d}(a+g^{b} c) \equiv g^{d} k \pmod{q}. \]
    Therefore, $h_2 h_1 = x^{g^{d}k}$.

    Now, if $h_1 h_2 = h_2 h_1 = x^k$, we will have $g^d k \equiv k \pmod{q}\implies q\mid k(g^d - 1)$.  Since $x^k \in H^{*}$ and ${\rm ord} _q(g) = n$, we get $n\mid d\implies h_2 \in H\implies h_1 = (h_2^{-1})(h_2 h_1) \in H$.
\end{proof}

Before stating the other lemma extracted from Bass's paper, we need to recall the Cauchy-Davenport inequality which is a very helpful tool while studying zero-sum problems in abelian groups.
\begin{lemma} \cite[Theorem~6.2]{G13}\label{CD} \textbf{Cauchy-Davenport Inequality}.
Let $p$ be a prime and $A_{1}, A_{2}, \ldots, A_{\ell}$ be nonempty subsets of $\mathbb{Z}_{p}$. Then
\[
|A_{1}+A_{2}+ \cdots+A_{\ell}| \geq \min \left\{p, \sum_{i=1}^{\ell}|A_{i}|-\ell+1\right\}
\]
where $A_{1}+A_{2}+\cdots+A_{\ell}\coloneq \{a_{1}a_{2} \cdots a_{\ell} : a_{i} \in A_{i}$ for $i=1, 2, \ldots, \ell \}$.
\end{lemma}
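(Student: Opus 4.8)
This is the $\ell$-summand form of the Cauchy--Davenport inequality, and since the paper cites \cite{G13} for exactly this statement, the quickest route in the text is simply to quote that reference. To argue from scratch, the plan is to settle the two-summand case $|A+B|\ge\min\{p,|A|+|B|-1\}$ first and then deduce the general case by induction on $\ell$; the first step carries all the content, the induction being routine bookkeeping.

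For the two-summand case I would first dispose of the easy range: if $|A|+|B|\ge p+2$, then for every $g\in\mathbb{Z}_p$ the sets $A$ and $g-B$ have sizes summing to more than $p$, hence meet, so $g\in A+B$; thus $A+B=\mathbb{Z}_p$ and the bound is immediate. So assume $|A|+|B|\le p+1$, in which case $\min\{p,|A|+|B|-1\}=|A|+|B|-1$. Suppose for contradiction that $|A+B|\le|A|+|B|-2$, and pick a set $C$ with $A+B\subseteq C\subseteq\mathbb{Z}_p$ and $|C|=|A|+|B|-2\le p-1$. Set $f(X,Y)=\prod_{c\in C}(X+Y-c)\in\mathbb{Z}_p[X,Y]$; its total degree is $|A|+|B|-2$, and the coefficient of $X^{|A|-1}Y^{|B|-1}$ equals $\binom{|A|+|B|-2}{|A|-1}$, which is nonzero in $\mathbb{Z}_p$ because $|A|+|B|-2<p$. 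Applying Alon's Combinatorial Nullstellensatz with the sets $A$ (of size $>|A|-1$) and $B$ (of size $>|B|-1$) produces $a\in A$, $b\in B$ with $f(a,b)\ne0$, i.e.\ $a+b\notin C$, contradicting $a+b\in A+B\subseteq C$. (Alternatively, one avoids the Nullstellensatz by a minimal-counterexample argument using the Davenport $e$-transform $(A,B)\mapsto(A\cup(B+e),\,B\cap(A-e))$, which keeps $|A|+|B|$ fixed and $A+B$ non-increasing while letting one shrink $\min\{|A|,|B|\}$ down to $1$, where the inequality is trivial.)

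For the general case I induct on $\ell\ge2$, the case $\ell=2$ being what was just proved. Assume the bound for $\ell-1$ summands and put $B:=A_1+\cdots+A_{\ell-1}$. If $\sum_{i=1}^{\ell-1}|A_i|-(\ell-1)+1\ge p$, the inductive hypothesis forces $|B|=p$, so $A_1+\cdots+A_\ell=\mathbb{Z}_p$ and we are done. Otherwise the inductive hypothesis gives $|B|\ge\sum_{i=1}^{\ell-1}|A_i|-\ell+2$, and applying the two-summand case to $B$ and $A_\ell$ yields
\[
|A_1+\cdots+A_\ell|=|B+A_\ell|\ \ge\ \min\{p,\,|B|+|A_\ell|-1\}\ \ge\ \min\Bigl\{p,\,\sum_{i=1}^{\ell}|A_i|-\ell+1\Bigr\},
\]
using that $t\mapsto\min\{p,t\}$ is non-decreasing. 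This completes the induction, and hence the proof. The only genuine obstacle is the two-summand base case (equivalently, the appeal to \cite{G13}); everything after it is mechanical.
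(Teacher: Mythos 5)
The paper itself offers no proof of this lemma: it is quoted as a black box from \cite[Theorem~6.2]{G13}, so there is no internal argument to compare yours against, and your decision to supply a from-scratch proof is a genuine (and welcome) difference rather than a divergence from the paper's method. Your argument is correct. In the two-summand base case, under $|A|+|B|\le p+1$ and the assumed bound $|A+B|\le |A|+|B|-2\le p-1$ the auxiliary set $C$ exists, the polynomial $f(X,Y)=\prod_{c\in C}(X+Y-c)$ has total degree $(|A|-1)+(|B|-1)$, and the coefficient of $X^{|A|-1}Y^{|B|-1}$ is $\binom{|A|+|B|-2}{|A|-1}$, which is a ratio of factorials of integers below $p$ and hence nonzero in $\mathbb{Z}_p$; the Combinatorial Nullstellensatz then yields $a\in A$, $b\in B$ with $a+b\notin C$, contradicting $A+B\subseteq C$. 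A minor remark: the easy range already works from $|A|+|B|\ge p+1$ (then $|A|+|\,g-B\,|>p$ for every $g$), not just $p+2$, but since your main case covers all of $|A|+|B|\le p+1$ the two ranges overlap and nothing is lost. The induction on $\ell$ is handled correctly, including the branch where the first $\ell-1$ sets already force $A_1+\cdots+A_{\ell-1}=\mathbb{Z}_p$, and the monotonicity of $t\mapsto\min\{p,t\}$ closes the step. The trade-off is the expected one: citing \cite{G13}, as the paper does, keeps the exposition short, while your proof (or the $e$-transform variant you sketch) makes the statement self-contained at the cost of invoking the Nullstellensatz; one should only note that the paper states the sumset multiplicatively because it is applied inside a multiplicatively written cyclic subgroup, which does not affect the argument.
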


Several proofs in this paper utilize the following lemma which can, in principle, be crystallized from \cite[Lemma~14]{JB07}, although it does not appear explicitly. So, we state the result and prove it as well.

\begin{lemma}{\cite{JB07}}\label{usefulbass}
    Let $G$ and $H$ be as in \lemref{bass1}. 
    \begin{enumerate}[label=(\alph*)]
        \item If $T$ is a sequence of elements of $G$ such that $|T| \geq q$ and $\Pi(T) \cap H \neq \emptyset$, then $T$ contains a product-one subsequence.
        \item Let $T_1, T_2, \ldots, T_k$ be nonempty sequences of elements of $G$ such that $\Pi(T_i) \subseteq H\,\forall\,i$, and $\displaystyle\sum_{i=1}^{k}|\Pi(T_i)|\ge q$. Let $T = T_1T_2 \cdots T_k$. Then $T$ contains a product-one subsequence.
    \end{enumerate}
\end{lemma}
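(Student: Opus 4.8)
The plan is to establish part (b) directly from the Cauchy-Davenport inequality (\lemref{CD}) and then to derive part (a) from (b) via a greedy decomposition; part (b) will not use (a).

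For (b), I would argue by contradiction: suppose $T = T_1T_2\cdots T_k$ is product-one free, so that $1\notin\Pi(U)$ for every nonempty subsequence $U$ of $T$; in particular $1\notin\Pi(T_i)$ for each $i$. If $k = 1$ then $\Pi(T_1)\subseteq H$ has at least $q = |H|$ elements, forcing $\Pi(T_1) = H\ni 1$, a contradiction; so assume $k\ge 2$. Identifying $H$ with $\Z_q$, put $A_1 = \Pi(T_1)$ and $A_i = \Pi(T_i)\cup\{1\}$ for $2\le i\le k$, where adjoining $1$ records the option of taking nothing from the block $T_i$. Since $1\notin\Pi(T_i)$, we get $|A_i| = |\Pi(T_i)| + 1$ for $i\ge 2$, hence $\sum_{i=1}^k|A_i| - k + 1 = \sum_{i=1}^k|\Pi(T_i)| \ge q$, and Cauchy-Davenport yields $A_1A_2\cdots A_k = H$. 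Choosing $h_i\in A_i$ with $h_1h_2\cdots h_k = 1$, each $h_i\ne 1$ lies in $\Pi(T_i)$ and is therefore realised as the product of all of $T_i$ in some order; concatenating all of $T_1$ (in an order giving $h_1$) with each $T_i$, $i\ge 2$, for which $h_i\ne 1$ (in an order giving $h_i$) produces a nonempty subsequence of $T$ whose product is $h_1h_2\cdots h_k = 1$ --- contradicting product-one freeness.

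For (a), observe first that since $G/H$ is abelian, $\Pi(T)\cap H\ne\emptyset$ forces $\Pi(T)\subseteq H$, and more generally $\Pi(U)\subseteq H$ for any subsequence $U$ whose image in $G/H$ sums to zero. I would then greedily extract subsequences $S_1, S_2,\dots$, where $S_j$ is a minimal subsequence of $T\setminus(S_1\cdots S_{j-1})$ with $\Pi(S_j)\cap H\ne\emptyset$: at each stage the leftover has image summing to zero in $G/H$, so the extraction continues until nothing remains, expressing $T$ --- up to a reordering, which is irrelevant to the existence of product-one subsequences --- as $S_1S_2\cdots S_r$. By \lemref{bass1}, each $S_j$ is either itself a product-one subsequence (in which case we are done) or satisfies $\Pi(S_j)\subseteq H$ and $|\Pi(S_j)|\ge|S_j|$; in the latter case $\sum_j|\Pi(S_j)|\ge\sum_j|S_j| = |T|\ge q$, so part (b) applies and produces a product-one subsequence of $T$.

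The main obstacle is that a naive application of Cauchy-Davenport to $\Pi(T_1),\dots,\Pi(T_k)$ only gives $|\Pi(T_1)\cdots\Pi(T_k)| \ge \sum_i|\Pi(T_i)| - (k-1)$, which can fall short of $q$ as soon as there are several blocks; the device of enlarging the sets (for $i\ge 2$) by adjoining $1$ exactly cancels the $-(k-1)$ loss, at the cost of one point of care --- checking that an element $h_i\ne 1$ of an enlarged set is genuinely produced by a sub-collection of $T_i$ lying in $H$ (which is where $\Pi(T_i)\subseteq H$ enters), and that concatenating such blocks in any order realises the product $h_1\cdots h_k$ (which is where abelianness of $H$ enters).
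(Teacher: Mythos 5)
Your proposal is correct and follows essentially the same route as the paper: for (b), adjoin the identity to $\Pi(T_i)$ for $i\ge 2$ and apply Cauchy--Davenport to force $1$ into the product set, and for (a), greedily partition $T$ into minimal subsequences with product in $H$ and invoke \lemref{bass1} to reduce to (b). Your version is a bit more explicit about why elements of the sumset are realised by genuine subsequences of $T$ (using $\Pi(T_i)\subseteq H$ and the abelianness of $H$), a point the paper passes over in one sentence, but there is no substantive difference.
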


\begin{proof}
We start by proving part $(b)$. If $1=x^0  \in \Pi(T_i)$ for some $i$, then we are done. If not, we define $T_1' = \Pi(T_1)$, and $T_i' = \Pi(T_i) \cup \{1\}$ for $2  \leq i  \leq k$. Write $\Sigma = \sum_{i=1}^k T_i'$. Observe that each element of $\Sigma$ can be obtained as a product of some subsequence of $T$. By the Cauchy-Davenport inequality, we get 
    \begin{align*}
    |\Sigma| \geq& \min \left\{q, \sum_{i=1}^k |T_i'|-k+1 \right\}\\ =& \min \left\{q, \left(\sum_{i=1}^k |\Pi(T_i)|\right) + k-1-k+1 \right\} \geq  \min \left\{q, \sum_{i=1}^k |\Pi(T_i)| \right\} \geq q.
    \end{align*}
    Therefore, $1 \in \Sigma$, and we are done proving part $(b)$.

    For part $(a)$, note that $G/H \cong \mathbb{Z}_n$ which is abelian; so $\Pi(T) \subseteq H$. Now, we partition $T$ into subsequences $T_1, T_2, \ldots, T_k$ such that each $T_i$ is minimal for the property that $\Pi(T_i) \subseteq H$. Then $\sum_{i=1}^k |T_i|=|T| \geq q$. If $1\in \Pi(T_i)$ for some $i$, then we are done. Otherwise, by \lemref{bass1}, we have $|\Pi(T_i)| \geq |T_i|\,\forall\,1 \leq i \leq k$. Therefore, $\sum_{i=1}^{k}|\Pi(T_i)|\ge q$, and we are done by part $(b)$.

\end{proof}

\begin{proof}[\textbf{Proof of \propref{f-const pq group prop}}]
Let $G= \mathbb{Z}_{p} \ltimes \mathbb{Z}_{q}= \left\langle x, y : x^{q}=y^{p}=1, x^{g}y= yx \right\rangle$, where $\ord_{q}(g)=p$. Let $H=\langle x\rangle \cong \mathbb{Z}_{q}$.
Since ${\rm ord} (x)=q$, we have $f(G) \geq q$. Now, given a
sequence $S$ of length $d(G)=p+q-1$, we must find a product-one
subsequence of length less than or equal to $q$. Let $S=
(g_{1},g_{2}, \ldots, g_{p+q-1})$, where $g_{i}=x^{a_{i}} y^{b_{i}}\,\forall\,i$. Define $S_{y}=(b_{1}, b_{2}, \ldots, b_{p+q-1})$, a sequence in $\mathbb{Z}_{p}$.

\vspace{2mm}
\noindent
\textit{Case I : (No element of $\mathbb{Z}_{p}$ is present in $S_{y}$ more than $q$ times.)}\\
In this case, $S_{y}$ can be written as a concatenation of disjoint subsequences $B_{1}, B_{2}, \ldots, B_{q}$ where no $B_i$ has any repeating elements as follows:

\vspace{1mm} We divide elements of $S_{y}$ into $q$
sets. At first, we take the first $q$ elements of $S_{y}$
and create $q$ singleton sets. Then we take the remaining elements
of $S_{y}$ one by one and place them in one of the sets that does not already contain a copy of it. We can keep doing this because no
element of $\mathbb{Z}_{p}$ is present in $S_{y}$ more than $q$
times, so at no point will we pick an element which has a copy of it
present in each of the $q$ sets we are building. The sets we build this way will correspond to $B_1, B_2, \ldots, B_q$. Therefore, we have
\[
\sum_{i=1}^{q} |B_{i}|=|S_{y}|= p+q-1.
\]

Thus, by using Cauchy-Davenport inequality, we obtain
\[
\left|\sum_{i=1}^{q} B_{i}\right| \geq \min\left\{p, \sum_{i=1}^{q} |B_{i}|-q+1\right\}=p.
\]
This implies that $0 \in \sum_{i=1}^{q} B_{i}$, that is there are
$q$ elements $c_{1}, c_{2}, \ldots, c_{q}$ in $S_{y}$ such that
$\sum_{i=1}^{q}c_{i}=0$ in $\mathbb{Z}_{p}$. Let $h_{1}, h_{2},
\ldots, h_{q}$ be the corresponding elements in $G$. Let $S'=(h_{1},
h_{2}, \ldots, h_{q})$. Then $\Pi(S')\subseteq H$.  Since $|S'|= q$, we get by \lemref{usefulbass}$(a)$ that $|S'|$ contains a product-one subsequence. Therefore, we are done in this case.

\vspace{2mm}
\noindent
\textit{Case II : (Some element $b$ of $\mathbb{Z}_{p}$ is present in $S_{y}$ more than $q$ times.)}\\
If $b=0$, we have $|S \cap H| > q$. Since $D(\Z_{q}) = f(\Z_q) =q$, we will get a product-one subsequence of $S \cap H$ of length less than or equal to $q$. So, we are done in this case.

If $b \neq 0$, let $g\equiv (g')^b \pmod{q}$. Since there is an isomorphism $G\xrightarrow{\sim} \Z_p\ltimes_{g'}\Z_q$ that sends $x\mapsto x$, $y\mapsto y^b$, we can assume, without loss of generality, that $b=1$. Then, we have a subsequence of $S$ of the form $S'=(x^{a_{1}}y, x^{a_{2}}y, \ldots, x^{a_{q+1}}y)$. Since $p \mid q-1$, we notice that $q+1 \geq 2p+2$. If $a_{i}$ consists of only one or two distinct values, then one of them must be present at least $p$ times. Since, we have
\begin{align*}
(x^{a}y)^{p}= x^{a+ga+ \cdots+ g^{p-1}a} y^{p}= x^{\frac{a}{g-1}(g^{p}-1)}=1,
\end{align*}
we are done in this case.

Now, we assume that there exists a subsequence $S_{1}= (x^{b_{1}}y, x^{b_{2}}y, x^{b_{3}}y)
\subset S^{'}$ consisting of $3$ distinct elements. Clearly, we have $|S' \setminus S_{1}| \geq 2p-1$. So, if $S' \setminus S_{1}$ was
made of only one or two distinct elements, one of them is present at
least $p$ times, and we are done again. Therefore, we may assume
that there is a subsequence $S_{2}= (x^{c_{1}}y,
x^{c_{2}}y, x^{c_{3}}y) \subset S^{'} \setminus S_{1}$ which consists of $3$ distinct elements.

\vspace{2mm}

For continuing with the proof, we need two lemmata which
we state and prove; they are of independent interest.

\begin{lemma} \label{3 dist elts lem}
Let $a_{1},a_{2},a_{3}$ be three distinct elements in
$\mathbb{Z}_{q}$ and $u$ be a positive integer. Define the set
\[
A= \left\{a_{\sigma(1)}+ua_{\sigma(2)}+u^{2}a_{\sigma(3)} : \sigma \in S_{3}\right\}.
\]
\begin{enumerate}[label=(\alph*)]
\item If $\ord_{q}(u) > 3$, then $|A| \geq 4$.
\item If $\ord_{q}(u)=3$, then $0 \in A$ or $|A|=6$.
\end{enumerate}
\end{lemma}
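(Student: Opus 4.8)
The plan is to put the data in a normal form and then exploit the parity structure of $S_3$. Observe first that replacing each $a_i$ by $a_i+c$ translates every element of $A$ by $c(1+u+u^2)$, while replacing each $a_i$ by $\lambda a_i$ (with $\lambda$ a unit) scales every element of $A$ by $\lambda$; since $q$ is prime, $\Z_q$ is a field, so we may normalize $a_1=0$, $a_2=1$, $a_3=t$ with $t\neq 0,1$, and $|A|$ depends only on the pair $(u,t)$. In case (b), where $\ord_q(u)=3$ forces $1+u+u^2=0$, the translation is trivial, so even the property ``$0\in A$'' is preserved by this normalization. With this choice the six numbers $f(\sigma)=a_{\sigma(1)}+ua_{\sigma(2)}+u^2a_{\sigma(3)}$, for $\sigma=e,(12),(13),(23),(123),(132)$ respectively, are
\[
v_1=u+u^2t,\quad v_2=1+u^2t,\quad v_3=t+u,\quad v_4=ut+u^2,\quad v_5=1+ut,\quad v_6=t+u^2 ;
\]
call $v_1,v_5,v_6$ the \emph{even} values and $v_2,v_3,v_4$ the \emph{odd} values.

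Next I would record the key structural fact: whenever $u\notin\{0,1,-1\}$ (which holds under either hypothesis), $f(\sigma)\neq f(\tau)$ for $\sigma,\tau\in S_3$ of opposite parity. Indeed such $\sigma,\tau$ satisfy $\tau=\sigma(ij)$ for a transposition $(ij)$, and then $f(\tau)-f(\sigma)=(c_i-c_j)(a_{\sigma(j)}-a_{\sigma(i)})$ where $\{c_1,c_2,c_3\}=\{1,u,u^2\}$; the second factor is nonzero since the $a_i$ are distinct, and $c_i-c_j$ equals $\pm(1-u)$, $\pm(1-u^2)$ or $\pm(u-u^2)$, all nonzero. Hence the set of even values and the set of odd values are disjoint subsets of $\Z_q$, so $|A|=|\{v_1,v_5,v_6\}|+|\{v_2,v_3,v_4\}|$, and it suffices to analyze coincidences \emph{within} each parity class.

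For part (a) a short computation shows each within-class coincidence is a single linear condition on $t$ given $u$: for instance $v_1=v_5\iff ut=-1$, $v_5=v_6\iff t=1+u$, $v_3=v_4\iff t=-u$, $v_2=v_4\iff ut=1+u$, and similarly for the remaining pairs. If the even class collapsed to a single value, then $ut=-1$ and $t=1+u$, whence $u(1+u)=-1$, i.e.\ $u^2+u+1=0$ and $\ord_q(u)=3$; if the odd class collapsed, then $t=-u$ and $ut=1+u$, giving again $u^2+u+1=0$. So when $\ord_q(u)>3$ neither parity class collapses, hence $|\{v_1,v_5,v_6\}|\ge 2$ and $|\{v_2,v_3,v_4\}|\ge 2$, and therefore $|A|\ge 4$.

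For part (b) we have $u^3=1$, $u\neq 1$, so $u^2+u+1=0$. Using this identity, all three coincidence conditions inside the even class reduce to the single equation $t=-u^2$, and all three inside the odd class to $t=-u$; consequently $|A|<6$ forces $t\in\{-u^2,-u\}$. Finally, if $t=-u^2$ then $v_1=u+u^2t=u-u^4=u-u=0$, and if $t=-u$ then $v_2=1+u^2t=1-u^3=0$; in either case $0\in A$. Thus $0\notin A$ implies $|A|=6$. I expect the only delicate point to be the bookkeeping of the within-class coincidence conditions — in particular verifying, in part (b), that they genuinely all collapse to $t=-u^2$ (resp.\ $t=-u$), which is where the relation $1+u+u^2=0$ does the essential work and simultaneously forces the common value to be $0$. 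The parity reduction is precisely what keeps this to six easy linear conditions instead of a pigeonhole over all pairs among the six values.
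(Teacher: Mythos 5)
Your proof is correct and follows essentially the same route as the paper: you split the six values by the parity of the permutation (exactly the paper's $T_1,T_2$), prove cross-class distinctness from the factorization $(a_r-a_s)(u^v-u^w)\neq 0$, and show that a within-class collapse forces $u^2+u+1=0$ in part (a) and forces an element of $A$ to be $0$ in part (b). The only cosmetic difference is your affine normalization $(a_1,a_2,a_3)=(0,1,t)$ — legitimate, as you note, since in case (b) the translation acts trivially because $1+u+u^2=0$ — where the paper instead exploits the identity $h_1=uh_2=u^2h_3$ when $u^3=1$.
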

\begin{proof}
The set $A$ consists of the following six elements:
\begin{align*}
&h_{1}= a_{1}+a_{2}u+a_{3}u^{2}, h_{2}= a_{2}+a_{3}u+a_{1}u^{2}, h_{3}=a_{3}+a_{1}u+a_{2}u^{2}\\
&h_{4}= a_{3}+a_{2}u+a_{1}u^{2}, h_{5}= a_{2}+a_{1}u+a_{3}u^{2},
h_{6}=a_{1}+a_{3}u+a_{2}u^{2}.
\end{align*}
Let $T_{1}= \{h_{1}, h_{2}, h_{3}\}$, $T_{2}=\{h_{4}, h_{5},
h_{6}\}$. For $i \in \{1,2,3 \}$ and $j \in \{4,5,6 \}$, notice that
\[
h_{i}-h_{j}= (a_{r}-a_{s})\left(u^{v}-u^{w}\right), \ \text{where} \ r \neq s \in \{1,2,3\} \ \text{and } v \neq w \in \{0,1,2\}.
\]
This implies $h_{i} \neq h_{j}$ as $a_{i}$ are distinct and ${\rm ord} _{q}(u) >2$. Therefore, we have $T_{1} \cap T_{2} = \emptyset$.\\

\noindent{Case I : (${\rm ord} _{q}(u)=3$)}\\
Let us assume that $ 0 \notin A$. Then, $h_{1}= uh_{2}= u^{2}h_{3}$, which gives us $h_{1} \neq h_{2}, h_{2} \neq h_{3}, h_{3} \neq h_{1}$, and
hence we have $|T_{1}|=3$. Similarly, we also have $|T_{2}|=3$. From
this we conclude that $|A|=6$.\\

\noindent \textit{Case II : (${\rm ord} _{q}(u) > 3$)}\\
Suppose that $|T_{1}|=1,$ that is $h_{1}=h_{2}=h_{3}$. Then, $h_{1}=h_{2}$ implies that
\begin{align*}
&a_{1}+a_{2}u+a_{3}u^{2}= a_{2}+a_{3}u+a_{1}u^{2} \implies (a_{1}-a_{2})+u(a_{2}-a_{3})=(a_{3}-a_{1}) (-u^{2})\\
& \implies (a_{1}-a_{2})(1-u)=(a_{3}-a_{1})(u-u^{2}) \implies (a_{1}-a_{2})=u(a_{3}-a_{1}).
\end{align*}
By symmetry, $h_{2}=h_{3}$ implies that $(a_{2}-a_{3})=u (a_{1}-a_{2})= u^{2}(a_{3}-a_{1})$ and $h_{3}=h_{1}$ implies that $(a_{3}-a_{1})=u(a_{2}-a_{3})=u^{3}(a_{3}-a_{1})$. This gives us $u^{3}=1$ which is a contradiction to the assumption $\ord_{q}(u) >3$. Therefore, $|T_{1}| \geq 2$, and by symmetry, $|T_{2}| \geq 2$ as well. Thus, $|A|=|T_{1}|+|T_{2}| \geq 4$. This completes the proof of \lemref{3 dist elts lem}.
\end{proof}

\begin{lemma}\label{inductive lem}
Let $A \subseteq \mathbb{Z}_{q}$ and $a, u \in \mathbb{Z}_{q}$ such that $1 < |A| < {\rm ord} _{q}(u)$. Let $c \in \mathbb{N}$. Define the sets $A_{1}= \{a+ub : b \in A\}$ and $A_{2}=\{b+u^{c}a: b \in A\}$. Then $|A_{1} \cup A_{2}| \geq |A|+1$.
\end{lemma}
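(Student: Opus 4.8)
The plan is to first observe that both $A_1$ and $A_2$ are images of $A$ under bijections of $\mathbb{Z}_q$: since $u$ has a multiplicative order it is a unit modulo the prime $q$, so $A_1 = \{a+ub : b\in A\}$ is the image of $A$ under the affine bijection $b\mapsto a+ub$, and $A_2 = \{b+u^c a : b\in A\}$ is a translate of $A$. Hence $|A_1| = |A_2| = |A|$, and the desired inequality $|A_1\cup A_2|\ge |A|+1$ becomes equivalent to the single assertion $A_1\neq A_2$: if $A_1\neq A_2$ then $A_1\cap A_2\subsetneq A_1$, so $|A_1\cup A_2| = 2|A| - |A_1\cap A_2| \ge |A|+1$. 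Thus the whole problem reduces to ruling out the equality $A_1 = A_2$.

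Assume for contradiction that $A_1 = A_2$. Put $n = \ord_q(u)$ and $t = (u^c-1)a \in \mathbb{Z}_q$; note that $n > |A| > 1$ forces $u\not\equiv 1 \pmod q$, so $u-1$ is also a unit modulo $q$. The equation $a+uA = A+u^c a$ rearranges to $uA = A+t$, which is precisely the statement that the affine bijection $\phi\colon \mathbb{Z}_q\to\mathbb{Z}_q$, $\phi(b) = ub - t$, satisfies $\phi(A) = A$. Therefore $A$ is a union of cycles of the permutation $\phi$ of the finite set $\mathbb{Z}_q$.

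The key step is to pin down the cycle type of $\phi$. Since $u-1$ is invertible, $\phi$ has the unique fixed point $b_0 = t/(u-1)$, and conjugating by the translation $b\mapsto b-b_0$ carries $\phi$ to the map $b\mapsto ub$, i.e.\ multiplication by $u$ on $\mathbb{Z}_q$. Multiplication by $u$ fixes $0$ and acts freely on $\mathbb{Z}_q^{*}$ (the stabilizer of a nonzero residue is trivial, as $q$ is prime), so each of its nontrivial cycles has length exactly $n$; the same therefore holds for $\phi$. Consequently, being a union of $\phi$-cycles, $A$ has size $\epsilon + kn$ with $\epsilon\in\{0,1\}$ and $k\ge 0$ an integer, and $|A| > 1$ forces $k\ge 1$, whence $|A|\ge n$ — contradicting $|A| < n$. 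This contradiction gives $A_1\neq A_2$ and finishes the proof.

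The one place demanding care is this cycle-structure step: it uses the primality of $q$ essentially twice (to invert both $u$ and $u-1$, and to make multiplication by $u$ act freely on the nonzero residues, so that all nontrivial orbits have the uniform length $\ord_q(u)$), and one should keep in mind the degenerate possibility $u^c\equiv 1 \pmod q$, which merely makes $t = 0$ and $b_0 = 0$ and is covered by the same argument.
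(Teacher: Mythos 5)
Your proof is correct, and it reaches the contradiction by a genuinely different mechanism than the paper, even though both arguments share the same opening reduction (both note $|A_1|=|A_2|=|A|$ and reduce to ruling out $A_1=A_2$) and both end up studying the same permutation of $A$: the matching $a+ua_i=a_{\sigma(i)}+u^c a$ used in the paper is exactly the restriction to $A$ of your affine map $\phi(b)=ub-t$ with $t=(u^c-1)a$. The paper stays combinatorial: it takes an arbitrary cycle of $\sigma$ of length $k>1$, uses the constancy of $ua_i-a_{\sigma(i)}$ to get $u(a_i-a_{i+1})=a_{i+1}-a_{i+2}$ around the cycle, and multiplies these relations to force $u^k=1$, contradicting $k\le |A|<\ord_q(u)$ (with at most one fixed point since the $a_i$ are distinct). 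You instead identify the permutation globally as an affine bijection of $\mathbb{Z}_q$, conjugate it by a translation to multiplication by $u$, and invoke the fact that multiplication by $u$ acts freely on $\mathbb{Z}_q^{*}$, so every nontrivial cycle has length exactly $n=\ord_q(u)$; hence $|A|\in\{kn,\,kn+1\}$ and $1<|A|<n$ is impossible. Your route buys a sharper structural conclusion (the exact cycle type, hence $|A|\equiv 0$ or $1 \pmod{n}$ whenever $A_1=A_2$) and makes the two uses of primality explicit, while the paper's telescoping-product argument is more elementary in that it never needs to recognize the affine structure or compute the fixed point, only the constancy of $ua_i-a_{\sigma(i)}$ along a cycle (it does still need $q$ prime to cancel the nonzero product of differences). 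Both proofs are complete and correct.
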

\begin{proof}
Let $n=|A|$, and $A=\{a_{1}, a_{2}, \ldots, a_{n}\}$. Clearly,
$|A_{1}|=|A_{2}|=n$. So, if $|A_{1} \cup A_{2}| \leq |A|$, then
$A_{1}=A_{2}$. Thus, the elements of $A_{1}$ must be in one-to-one
correspondence with the elements of $A_{2}$, that is, there exists
$\sigma \in S_{n}$ such that $a+ua_{i}= a_{\sigma(i)}+u^{c}a$. Now,
break $\sigma$ into disjoint cycles. We observe that if $a+ua_{i}=
a_{i}+u^{c}a$, then $a_{i}=a \left(\tfrac{u^{c}-1}{u-1}\right)$. Since the
$a_{i}$'s are distinct, at most one $1$-cycle can be there.

Take a cycle of length $k > 1$. Without loss of
generality, we can assume that the cycle is $(1, 2, \ldots, k)$.
For the rest of the proof, we will take the indices $i$ of $a_i$ modulo $k$.

We have $a+ua_{i}=a_{i+1}+u^{c}a\,\forall\, 1 \leq i \leq k$. Equivalently, we get $ua_{i} - a_{i+1}=a (u^{c}-1)\, \forall\, 1 \leq i \leq k$. Consequently, for all $1 \leq i \leq k$, we obtain $u a_{i}-a_{i+1}= u a_{i+1}-a_{i+2} \implies u(a_{i}-a_{i+1})= a_{i+1}-a_{i+2}$. Multiplying the left-hand side and
right-hand side of these $k$ equalities, we get: 
\[
u^{k} \prod_{i=1}^{k} (a_{i}-a_{i+1})=  \prod_{i=1}^{k} (a_{i}-a_{i+1}).
\]
However, this simplifies to $u^{k}=1$, which contradicts the
assumption $k \leq n =|A| < {\rm ord} _{q}(u)$.
\end{proof}

Returning to the proof of the theorem, we consider three subcases:\\

\noindent\textit{Subcase I: } We assume $p >5$ for now. Let
\[
S'\setminus S_{1} S_{2}= \left(x^{c_{4}}y, x^{c_{5}}y, \ldots,
x^{c_{p-3}}y\right)\cdot\left(x^{d_{p+1}}y, x^{d_{p+2}}y, \ldots, x^{d_{q+1}}y\right).\]
Let $W_{1}=S_1$, $W_{3}= S_2$ and for all $3 \leq i
\leq p-4$, let $W_{i+1}=W_{i} \cdot (x^{c_{i+1}}y)$. Since $c_{1},
c_{2}$ and $c_{3}$ are distinct, by \lemref{3 dist elts lem} (applied to the exponents of $x$), we have $|\Pi(W_{3})| \geq 4$. Since every element of $\Pi(W_{i})$ looks like $x^{d}y^{i}$, we observe that if $\Pi(W_{i}) \supseteq
\{x^{d_{j}}y^{i}\}_{j=1}^{i+1}$, then 
\[
\Pi(W_{i+1}) \supseteq
\left\{x^{d_{j}+g^{i}c_{i+1}} y^{i+1}\right\}_{j=1}^{i+1} \cup
\left\{x^{c_{i+1}+gd_{j}} y^{i+1}\right\}_{j=1}^{i+1}.
\]
Then, by applying \lemref{inductive lem} to the exponents of $x$ and using induction on $i$, we get $|\Pi(W_{i})| \geq i+1$ for all $3 \leq i \leq p-3$. In particular, $|\Pi(W_{p-3})|
\geq p-2$. Also, by \lemref{3 dist elts lem}, $|\Pi(W_{1})| \geq
4$.

Let $X_{1}= W_{1} W_{p-3}$. Then
\[
\Pi(X_{1}) \supseteq \left\{x^{a+g^{3}b}: x^{a}y^{3} \in \Pi(W_{1}), \ x^{b}y^{p-3}\in \Pi(W_{p-3})\right\}.
\]
For any subsets $A, B$ of $\mathbb{Z}_{q}$ with $|A|=4$, $|B|=p-2$, it follows from Cauchy-Davenport inequality that
\[
|A+g^{3}B| \geq \min\{q, |A|+|B|-1 \}=\min\{q, p+1\}.
\]
Thus, we have $|\Pi(X_{1})| \geq p+1$. Let $\frac{q-1}{p}=r$.
Define $X_{i}=(x^{d_{(i-1)p+1}}y, \ldots, x^{d_{ip}}y)$ for every $2 \leq i \leq r$. Note that $\Pi(X_{i}) \subseteq H$ for all $1\le i\le r$. If $1\in \Pi(X_i)$ for some $i>1$, then we are done. Otherwise, by \lemref{bass1}, $|\Pi(X_i)|\ge |X_i|= p \,\forall\,i>1$. Therefore, 
\[
\sum_{i=1}^{r}|\Pi(X_i)| \ge (p+1) + (r-1)p = rp+1 = q.
\]

Now, by \lemref{usefulbass}$(b)$, the sequence $X\coloneq X_{1} X_{2}\cdots X_{r}$ contains a product-one subsequence. Since, $|X| = rp = q-1$, we are done.

\vspace{2mm}
\noindent
\textit{Subcase II: ($p=3$ case)}\\
Again let  $W_{1}=(x^{b_{1}}y, x^{b_{2}}y, x^{b_{3}}y)$,
$S \setminus W_{1}=(x^{d_{4}}y, \ldots, x^{d_{q+1}}y)$ and
$W_{i}=(x^{d_{3i-2}}y, x^{d_{3i-1}}y, x^{d_{3i}}y)$ for all $2 \leq
i \leq r$, where $r=\frac{q-1}{3}$. Note that $\Pi(W_i)\subseteq H$ for all $1\le i\le r$. If $1\in\Pi(W_i)$ for any $i$, then we are done. Otherwise, by \lemref{3 dist elts lem}, we have $|\Pi(W_{1})| \geq 6$, and \lemref{bass1} implies that $|\Pi(W_{i})| \geq 3 \,\forall\,i>1$. Therefore,
\[\sum_{i=1}^{r}|\Pi(W_i)| \ge 6 + (r-1)3 = 3r+3 = q+2.\]
Thus, \lemref{usefulbass}$(b)$ implies that $W\coloneq W_1 W_2 \cdots W_r$ contains a product-one subsequence. Since, $|W|= 3r = q-1$, we are done in this subcase.

\vspace{2mm}
\noindent
\textit{Subcase III:  ($p=5$ case)}\\
Recall that $S'=(x^{a_{1}}y, x^{a_{2}}y, \ldots, x^{a_{q+1}}y)
\subset S$. We show that, either we can construct disjoint $3$ element subsequences $W_{1}, W_{2}, W_{3}$  of $S'$ such that each $W_i$ consists of $3$ distinct elements, or we are done (in the sense that we get a product-one subsequence of $S'$ of length $\le q$). 

Suppose some $a_{i}$ appears more than $4$ times; then we are done, since $(x^{a_i}y)^5=1$. Therefore, we assume that no $a_{i}$ is present more than 4 times. Since $q+1 \geq 12$, there must be 3 distinct elements among $a_{i}$s. If no $a_{i}$ is present more than twice, then by pigeonhole principle, we can find $W_{1} \subset S'$, $W_{2} \subset S'\setminus W_{1}$ and $W_{3} \subset S'\setminus (W_{1} W_{2})$, where $|W_{i}|=3$ consists of 3 distinct elements each. If not, then some number $b$ is present $3$ or $4$ times among $a_{i}$. Focus on the elements different from $x^b y$. If none of them are present more than twice, again by pigeonhole principle, we can find disjoint (as subsequences) $V_{1}, V_{2}, V_{3}$ from them such that $|V_{i}|=2$ consists of 2 distinct elements. Then, we can set $W_{i}= V_{i}\cdot(x^{b}y)$. If even the last assumption does not hold, then some $c \neq b$ is also present 3 or 4 times. Therefore, we get disjoint $W_{1},W_{2},W_{3}$ by pairing $(x^b y, x^c y)$ with any remaining terms. So, we are done constructing $W_1, W_2, W_3$ in all possible cases.

Let $q-1=5r$ and let $S'\setminus W_{1} W_{2} W_{3}= (x^{m} y, x^{d_{11}}y, x^{d_{12}} y, \ldots, x^{d_{q+1}}y)$. Let $W_{4}=W_{3}\cdot (x^{m}y)$, $V_{1}= W_{1} W_{2} W_{4}$ and $V_{i}=(x^{d_{5i+1}}y, \ldots, x^{d_{5(i+1)}}y)$ for all $2 \leq i \leq r-1$. Note that $\Pi(V_i)\subseteq H$ for all $1\le i\le r$. If $1\in \Pi(V_i)$ for any $i$, then we are done. Otherwise, on using the \lemref{bass1}, we get $|\Pi(V_{i})| \geq 5$ for all $2 \leq i \leq r-1$. From \lemref{3 dist elts lem}, we obtain $|\Pi(W_{i})| \geq 4$ for all $1 \leq i \leq 3$ and we apply \lemref{inductive lem} to the exponents of $x$ to obtain $|\Pi(W_{4})| \geq 5$. Since \[
\Pi(V_1)\supseteq \left\{x^{a+ g^3 b + g^6 c}: x^a y^3 \in \Pi(W_1), x^b y^3 \in \Pi(W_2), x^c y^4 \in \Pi(W_4)\right\},
\]
we get by Cauchy-Davenport inequality $|\Pi(V_{1})| \geq 4+4+5-2 =11$, and hence
\[\sum_{i=1}^{r} |\Pi(V_{i})| \geq 11+ 5(r-2)=5r+1=q.\]

Now, by \lemref{usefulbass}$(b)$, the sequence $V\coloneq V_1 V_2 \cdots V_{r-1}$ has a product-one subsequence and $|V| = 10 + 5(r-2) = q-1$. This completes the proof of \propref{f-const pq group prop}.
\end{proof}

\vskip 5mm

\section{Computing $d(G)$ and $f(G)$ for $\mathbb{Z}_{p} \ltimes \mathbb{Z}_{n}$, $p$ prime}\label{f, d-const pn gp sec}

In this section, we prove \thmref{fconstgeneraln}. Before we discuss the proof, we develop a few preliminary results. 

We first prove \propref{extension}, which is not only useful here but also of independent interest. The Davenport constant for a group is trivially bounded above in terms of the corresponding constants for any normal subgroup and the quotient. However, using the $f$-constant, we can prove a potentially stronger result.

\begin{proof}[\textbf{Proof of \propref{extension}}]
Start with any sequence $S$ in $G$ with $|S| = (d(H)-1)f(K) + d(K)$.
Let us keep selecting disjoint product-one subsequences $T_1, \ldots, T_t$ of $\phi(S)$ of length $\leq f(K)$ such that until we are no longer able to do so. Let $S_1, \ldots, S_t$ be the corresponding subsequences in $S$. Now, $\Pi(S_i) \cap H \neq \emptyset$ for each $1\le i \leq t$. Further, from
the definition of $f(K)$, we have
\[
|\phi(S)\setminus T_1 \cdots T_t|\le d(K) \implies |S \setminus S_1 \cdots S_t| < d(K).
\]
Hence, $d(K) > |S| - \sum_{i=1}^t |S_i| \geq (d(H)-1)f(K)+d(K) - t
f(K)$, which implies $t> d(H)-1$. Thus, $t \geq d(H)$. Choose $s_i \in \Pi(S_i) \cap H$ for $1 \leq i \leq d(H)$. We have a
product-one subsequence $T' = (s_{a_1}, \ldots, s_{a_n})$ of length
at the most $f(H)$. Therefore, in $S$, we have the product-one
subsequence $T = S_{a_1} \cdots S_{a_n}$ such that
\[
|T| = \sum_{i=1}^n \left|S_{a_i}\right| = \sum_{i=1}^n \left|T_{a_i}\right|  \leq n f(K) \leq f(H) f(K).
\]
This completes the proof of both assertions.
\end{proof}

Next, we recall another very useful
tool in the combinatorial theory of finite abelian groups. It is a generalization of Kneser's theorem, which is called the DeVos-Goddyn-Mohar (DGM) theorem. Let $G$ be a finite abelian group and $\mathbf{A}=(A_{1}, \ldots, A_{\ell})$ be a sequence of finite subsets of $G$. For any $m \leq \ell$, we define
\[
\Pi^{m}(\mathbf{A})\coloneq\left\{a_{i_{1}}\cdots a_{i_{m}}: 1 \leq i_{1} < \cdots < i_{m} \leq \ell \ \text{and} \ a_{i_{j}} \in A_{i_{j}} \ \text{for every} \ 1 \leq j \leq m\right\}.
\]
For any subset $A$ of $G$, its stabilizer is given by $\Stab(A)=\{g \in G : gA = A\}$.
\begin{lemma}\cite[Theorem~13.1]{G13}\label{DGM} \textbf{DGM theorem.}
Let $\mathbf{A}=(A_{1}, \ldots, A_{\ell})$ be a sequence of finite subsets of a finite abelian group $G$. Let $m \leq \ell$, and $H=\Stab(\Pi^{m}(\mathbf{A}))$. If $\Pi^{m}(\mathbf{A})$ is nonempty, then
\[
\left|\Pi^{m}(\mathbf{A})\right| \geq |H| \left(1-m+ \sum_{Q \in G/H} \min \left\{m, \left|\{i \in [1, \ell]: A_{i} \cap Q \neq \emptyset \right\}\right| \}\right).
\]
\end{lemma}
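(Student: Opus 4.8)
The plan is to deduce the statement from Kneser's addition theorem, of which it is precisely the DeVos--Goddyn--Mohar generalization: when $m=\ell$ one has $\Pi^{\ell}(\mathbf{A})=A_{1}+\cdots+A_{\ell}$, and since $|\{i:A_{i}\cap Q\neq\emptyset\}|\le\ell$ the right-hand side collapses to $\sum_{i}|A_{i}+H|-(\ell-1)|H|$ with $H=\Stab(A_{1}+\cdots+A_{\ell})$, that is, exactly Kneser's bound. So $m=\ell$ will serve as the base case of an induction on $\ell$, and the substance is the passage from $\ell$ to $\ell+1$.

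First I would reduce to the \emph{aperiodic} case. Empty $A_{i}$ may be discarded (they meet no coset $Q$; and if fewer than $m$ nonempty sets remain then $\Pi^{m}(\mathbf{A})=\emptyset$, contrary to hypothesis). Put $H=\Stab(\Pi^{m}(\mathbf{A}))$ and pass to $\overline{G}=G/H$ with images $\overline{A}_{i}$. Since the quotient map is a homomorphism, $\Pi^{m}(\overline{\mathbf{A}})$ is the image of $\Pi^{m}(\mathbf{A})$; the latter is $H$-periodic, so $|\Pi^{m}(\mathbf{A})|=|H|\,|\Pi^{m}(\overline{\mathbf{A}})|$ and $\Stab(\Pi^{m}(\overline{\mathbf{A}}))=\{\overline 0\}$. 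Because $\overline{g}\in\overline{A}_{i}$ iff $A_{i}\cap(g+H)\neq\emptyset$, the claimed bound for $\mathbf{A}$ in $G$ is exactly $|H|$ times the corresponding bound for $\overline{\mathbf{A}}$ in $\overline{G}$. Hence it suffices to prove: if $\Pi^{m}(\mathbf{A})$ is aperiodic in $G$, then $|\Pi^{m}(\mathbf{A})|\ge 1-m+\sum_{g\in G}\min\{m,\mu(g)\}$, where $\mu(g)=|\{i:g\in A_{i}\}|$.

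For the aperiodic case I would lift into $G'=G\times\Z$ (Kneser's theorem is valid in any abelian group, and the stabilizers that arise here are automatically finite). Set $A_{i}'=(A_{i}\times\{1\})\cup\{(0,0)\}$ and $C=A_{1}'+\cdots+A_{\ell}'$; choosing one summand from each $A_{i}'$ amounts to picking a set $I\subseteq[1,\ell]$ of indices that contribute an element of $A_{i}$ at height $1$ while the rest contribute $(0,0)$, so $C=\bigsqcup_{j=0}^{\ell}\bigl(\Pi^{j}(\mathbf{A})\times\{j\}\bigr)$ with $\Pi^{0}(\mathbf{A})=\{0\}$. Since $C$ is supported on the heights $0,\dots,\ell$, every element of $\Stab(C)$ has trivial $\Z$-component and therefore fixes each height-slice of $C$, in particular $\Pi^{m}(\mathbf{A})$; by aperiodicity $\Stab(C)$ is trivial. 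Kneser's theorem then yields $|C|\ge\sum_{i}|A_{i}'|-(\ell-1)=\sum_{i}(|A_{i}|+1)-(\ell-1)=\sum_{i}|A_{i}|+1$, and as $|C|=1+\sum_{j=1}^{\ell}|\Pi^{j}(\mathbf{A})|$ this gives
\[
\sum_{j=1}^{\ell}\bigl|\Pi^{j}(\mathbf{A})\bigr|\ \ge\ \sum_{i}|A_{i}|\ =\ \sum_{g\in G}\mu(g).
\]

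The main obstacle is that this controls only the total $\sum_{j}|\Pi^{j}(\mathbf{A})|$, whereas we want the single slice $j=m$; for $j\neq m$ the stabilizer $\Stab(\Pi^{j}(\mathbf{A}))$ need not be trivial, so the other slices cannot simply be bounded below by $1$ and subtracted off. To isolate the $m$-th slice I would return to the induction on $\ell$: deleting $A_{\ell}$ gives $\Pi^{m}(\mathbf{A})=\Pi^{m}(\mathbf{A}')\cup\bigl(\Pi^{m-1}(\mathbf{A}')+A_{\ell}\bigr)$ for $\mathbf{A}'=(A_{1},\dots,A_{\ell-1})$, and one must bound this union by combining the inductive estimates for $\Pi^{m}(\mathbf{A}')$ and $\Pi^{m-1}(\mathbf{A}')$ with a Kneser-type bound for the sumset, all the while tracking the periodicities of the pieces. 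This union-and-stabilizer bookkeeping is exactly what makes the DeVos--Goddyn--Mohar theorem a theorem rather than a one-line corollary of Kneser, and it is carried out in full in \cite{G13}; an alternative is the original DeVos--Goddyn--Mohar route, which recasts the sharp inequality as a Hall-type (min-cost flow) feasibility statement and proves it by deforming over such flows.
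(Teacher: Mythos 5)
The paper does not actually prove this lemma: it is imported verbatim as Theorem~13.1 of \cite{G13} (the DeVos--Goddyn--Mohar theorem) and used as a black box, so there is no internal argument to compare yours against. Judged as a proof, your proposal has a genuine gap, and you say so yourself. The preliminary reductions are fine: the $m=\ell$ case is indeed exactly Kneser's bound; the passage to $G/\Stab(\Pi^{m}(\mathbf{A}))$ correctly reduces to the aperiodic case (the truncation $\min\{m,\cdot\}$ translates as you claim, since $\overline{g}\in\overline{A}_{i}$ iff $A_{i}$ meets $g+H$); and the lift to $G\times\Z$ with $A_i'=(A_i\times\{1\})\cup\{(0,0)\}$ is a correct computation whose slice decomposition and stabilizer analysis are sound. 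But that lift only yields $\sum_{j=1}^{\ell}|\Pi^{j}(\mathbf{A})|\ge\sum_{i}|A_i|$, a bound on the \emph{total} over all slices with no truncation at $m$, and it cannot be converted into the stated bound for the single slice $\Pi^{m}(\mathbf{A})$ without the whole machinery you then wave at: the induction via $\Pi^{m}(\mathbf{A})=\Pi^{m}(\mathbf{A}')\cup\bigl(\Pi^{m-1}(\mathbf{A}')+A_{\ell}\bigr)$ with careful tracking of the (generally different and nontrivial) stabilizers of the two pieces. That union-and-periodicity analysis is precisely the content of the theorem, and your proposal explicitly defers it to \cite{G13} rather than carrying it out; a naive combination of the inductive bounds with Kneser does not close, which is why the original proof (and Grynkiewicz's) is a genuinely multi-page argument.

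In context this is not a defect of the paper's usage --- citing \cite{G13} is exactly what the authors do --- but as a standalone proof of the statement your writeup establishes only the reductions and the $m=\ell$ case, not the theorem itself. If you intend to cite the result, say so up front and drop the partial argument; if you intend to prove it, the missing inductive step with stabilizer bookkeeping is the entire task.
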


Lastly, we treat a special case of \thmref{fconstgeneraln} separately where $n$ is a power of $p$. Our argument provides a new proof of $d(\mathbb{Z}_p \ltimes \mathbb{Z}_{p^k}) = p^{k}+p-1$ as well.

\begin{prop}\label{df-ppower}
Given any odd prime $p$ and positive integer $k>1$, we have
\[
d(\Z_p \ltimes \Z_{p^k}) = p^k+p-1, \text{ and }
f(\Z_p \ltimes \Z_{p^k}) = p^k.
\]
\end{prop}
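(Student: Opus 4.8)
The plan is to prove the two equalities in \propref{df-ppower} by establishing matching upper and lower bounds, treating $G = \Z_p \ltimes \Z_{p^k}$ with $H = \langle x\rangle \cong \Z_{p^k}$ the normal cyclic subgroup and $G/H \cong \Z_p$. For the lower bound $d(G) \geq p^k + p - 1$, I would exhibit an explicit product-one-free sequence of length $p^k + p - 2$: take $p^k - 1$ copies of $x$ together with $p - 1$ copies of $y$ (or a suitable generator of a complement). One checks that any product-one subsequence would have to project to a product-one subsequence in $\Z_p$, forcing the number of $y$'s used to be $0$ or $p$; since only $p-1$ copies are present, it must be $0$, and then a product-one subsequence among the copies of $x$ needs all $p^k - 1$ of them to multiply to $1$, which they do not. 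Hence $d(G) \geq p^k + p - 1$, and a symmetric idea gives $f(G) \geq p^k$ since $\ord(x) = p^k$.

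For the upper bound on $d(G)$, the natural route is to invoke \propref{extension} with the short exact sequence $1 \to H \to G \to \Z_p \to 1$: since $d(\Z_p) = p$ and $f(\Z_p) = p$, this gives $d(G) \leq (d(H) - 1)f(\Z_p) + d(\Z_p) = (p^k - 1)p + p = p^{k+1}$, which is far too weak. So instead I expect the argument to mimic the structure of the proof of \propref{f-const pq group prop}: given a sequence $S$ of length $p^k + p - 1$ with $g_i = x^{a_i}y^{b_i}$, look at the projection $S_y = (b_1,\dots,b_{p^k+p-1})$ in $\Z_p$. If some element of $\Z_p$ appears more than $p^k$ times in $S_y$ we either land in $H$ (and use $d(\Z_{p^k}) = p^k$) or collect $p$ copies of some $x^a y$ whose product is $1$. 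Otherwise, $S_y$ splits into $p^k$ blocks $B_1,\dots,B_{p^k}$ with no repeats, and here is where the DGM theorem (\lemref{DGM}) comes in rather than plain Cauchy–Davenport: applying it to the $p^k$ sets $B_i$ viewed inside $\Z_p$, and carefully tracking the stabilizer, one obtains enough product values to force a product-one subsequence, while controlling its length to be at most $p^k$ so that the $f$-bound $f(G) \leq p^k$ follows simultaneously.

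The main obstacle I anticipate is exactly this DGM bookkeeping in the $p$-group setting: because $p \mid p^k$, the "no element repeated more than $p^k$ times" case is delicate, and the blocks $B_i$ living in $\Z_p$ can have stabilizer equal to all of $\Z_p$, so one must argue that the resulting product set $\Pi^m(\mathbf{A})$ is either all of $\Z_p$ (done) or has a nontrivial structure that still yields $0$. A secondary difficulty is lifting a zero-sum in $\Z_p$ of length $m \leq p^k$ back to a product-one subsequence in $G$ of comparable length; this uses \lemref{usefulbass}$(a)$ exactly as in the prime case, since a subsequence $S'$ with $\Pi(S') \subseteq H$ and $|S'| \geq p^k = d(H)$ contains a product-one subsequence — but one must ensure $|S'| \leq p^k$ to get the sharp $f$-value, which forces the zero-sum extracted from DGM to have length exactly $p^k$ (or less). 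I would organize the proof so that the same case analysis delivers both $d(G) \leq p^k + p - 1$ and $f(G) \leq p^k$ at once, then combine with the lower bounds above.
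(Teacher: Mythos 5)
Your lower bounds are fine and agree with the paper's, but the upper-bound strategy has a genuine gap at exactly the step you describe as routine: the lifting of a zero-sum in $\Z_p$ back to a product-one subsequence via \lemref{usefulbass}$(a)$ "exactly as in the prime case". That lemma, and the lemmas \ref{bass1} and \ref{bass2} underlying it, are stated and proved only for $G=\Z_n\ltimes\Z_q$ with $q$ \emph{prime} and $H\cong\Z_q$; the proofs use the Cauchy--Davenport inequality in $\Z_q$ and the primality of $q$ in an essential way, and the statements actually fail for $H\cong\Z_{p^k}$. Concretely, in $\Z_p\ltimes\Z_{p^k}$ one has $g\equiv 1\pmod{p^{k-1}}$, so $h_1=y$ and $h_2=x^p y^{-1}$ satisfy $h_1h_2=h_2h_1=x^p\in H^{*}$ with neither $h_i$ in $H$, contradicting the conclusion of \lemref{bass2}; consequently $S=(y,\,x^py^{-1})$ is minimal with $\Pi(S)\cap H\neq\emptyset$, $1\notin\Pi(S)$, yet $|\Pi(S)|=1<|S|$, so \lemref{bass1} fails too, and the Cauchy--Davenport step in \lemref{usefulbass}$(b)$ is unavailable since $\Z_{p^k}$ is not of prime order. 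So the claim "a subsequence $S'$ with $\Pi(S')\subseteq H$ and $|S'|\geq p^k$ contains a product-one subsequence" is precisely the hard point, not a quotable fact; whether or not it is ultimately true, it needs a new argument. A second, smaller gap: in your "some $b\neq 0$ appears more than $p^k$ times" case, pigeonhole among more than $p^k$ elements with the same $y$-part only yields two equal elements, not $p$; and even $p$ copies of the same element $x^ay^b$ need not multiply to $1$ here, since $(x^ay^b)^p=x^{ap}\neq 1$ unless $p^{k-1}\mid a$ (unlike $\Z_p\ltimes\Z_q$, where $(x^ay)^p=1$ always). In the prime case this branch required Lemmas \ref{3 dist elts lem} and \ref{inductive lem} and a delicate case analysis; your sketch does not address its analogue.

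For contrast, the paper's proof sidesteps all of this by working with the center $Z(G)=\langle x^p\rangle\cong\Z_{p^{k-1}}$ and the quotient $G/Z(G)\cong\Z_p\oplus\Z_p$ rather than with $H=\langle x\rangle$ and $\Z_p$. It pigeonholes the projection of $S$ into the $p+1$ cyclic subgroups of $\Z_p\oplus\Z_p$, greedily extracts disjoint minimal subsequences with product in $Z(G)$ of length at most $p$ (the $K_r$) or of length in $[p+1,2p-1]$ (the $L_s$, using $d(\Z_p\oplus\Z_p)=2p-1$), replaces the unavailable \lemref{bass1} by a centralizer argument showing $|\Pi(L_s)|\geq 2$, and then applies the DGM theorem \emph{inside} $Z(G)\cong\Z_{p^{k-1}}$ with an explicit stabilizer dichotomy; the count $i+2j=p^{k-1}$ keeps the total length at most $p^k$, giving $d(G)\leq p^k+p-1$ and $f(G)\leq p^k$ simultaneously. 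If you want to salvage your route, you would essentially have to prove the $\Z_{p^k}$-analogue of \lemref{usefulbass}$(a)$ first, and the natural way to do that is the paper's center/DGM argument itself.
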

\begin{proof}
Let $G = \mathbb{Z}_p\ltimes \mathbb{Z}_{p^k} =\langle x,y| x^{p^k}=y^p=1, x^gy=yx \rangle$, where the positive integer $g$ has order $p$ modulo $p^k$. Since the sequence
\[
(\underbrace{x, \ldots, x}, \underbrace{y, \ldots, y})
\]
where $x$ occurs $p^k -1$ times and $y$ occurs $p-1$ times is product-one free, we get $d(G) \geq p^k +p-1$. Also, $f(G)\ge \ord(x) = p^k$.

It is easy to observe that the center of $G$ is
\[
Z(G) = \left\langle x^p \right\rangle \cong \mathbb{Z}_{p^{k-1}}.
\]
We further note that $G/Z(G)$ is abelian, and congruent to the additively written group $\mathbb{Z}_p \oplus \mathbb{Z}_p$. Therefore, we have a short exact sequence
\[
1 \rightarrow Z(G) \rightarrow G \stackrel{\phi}{\rightarrow}
\mathbb{Z}_p \oplus \mathbb{Z}_p \rightarrow 1.
\]
\vspace{1 mm}

\noindent{\it Step I:} We claim that if $h \in G \setminus Z(G)$, then
$\phi(C_G(h)) \cong \mathbb{Z}_p$, where $C_G(h)$ denotes the
centralizer. To see this, we just need to note that $C_G(h)$ for $h$ as above
contains $Z(G)$ properly and is contained in $G$ properly; hence it contains $Z(G)$ as a subgroup of
index $p$.\\

\noindent {\it Step II:}  Consider any $S$ with $|S| = p^k+p-1$. Fix
a minimal subsequence $T \subset S$ such that $\Pi(\phi(T))= \{(0,0)\}$.
We claim that if $|T|>p$, then $|\Pi(T)|> 1$.

To prove this, write $T = h_1 h_2 \cdots h_t$ with $t>p$. Since $T$ is minimal, we have  $\phi(h_{1}) \neq (0,0)$ which implies that $h_1 \not \in Z(G)$. If $h_2,
\ldots, h_t$ are in $C_G(h_1)$, then $\phi(T) \subseteq
\phi(C_G(h_1)) \cong \mathbb{Z}_p$ by \textit{Step I}. Since $d(\Z_{p})=p$ and $t > p$, we get a proper subsequence of $\phi(T)$ whose sum is $(0,0)$. This contradicts the minimality of $T$. Thus, there exists $h_i \not\in C_G(h_1)$.
Hence, the products
$$h_1h_ih_2 \cdots h_{i-1}h_{i+1} \cdots h_t$$
and
$$h_ih_1h_2 \cdots h_{i-1}h_{i+1} \cdots h_t$$
are unequal; thus $|\Pi(T)|>1$ and \textit{Step II} holds.\\

To continue with the proof for $n=p^k$, consider a sequence $S$ of elements of $G$ of length $p^k + p -1$. Note that
\[
\Z_p \oplus \Z_p = \bigcup_{m=0}^{p-1} \left\langle (1,m) \right\rangle \cup \left\langle (0,1) \right\rangle.
\]
Write $C_m =  \langle (1,m) \rangle$ for $0 \leq m <p$, and $C_p = \langle (0,1) \rangle$. By the pigeon-hole principle, there is some $1\le m\le p$ such that $C_m$  contains at least $\lceil (p^k+p-1)/(p+1) \rceil = p^{k-1}$ elements of $S$. Since $\phi(C_m) \cong \Z_p$, and $d(\Z_p) =p$, we find a subsequence (say $K_1$ ) of $S$ whose length is at most $p$, and whose product is in $Z(G)$. Now, the idea is to find as many disjoint minimal subsequences of $S \setminus K_1$ as possible whose products are also in $Z(G)$. Let $K_1, K_2, \ldots, K_i$ be those minimal subsequences with product in $Z(G)$ with length at most $p$, and let $L_1, \ldots, L_j$ are those minimal subsequences of $S$ whose products are in $Z(G)$ and whose lengths are in $[p+1, 2p-1]$, noting that $d(\Z_p \oplus \Z_p) = 2p-1$\cite{Ol1}. By \textit{Step II} above, each $|\Pi(L_a)| \geq 2$.

Now, $|S \setminus K_1 \cdots K_i L_1 \cdots L_j| < 2p-1$; else, we would have found more subsequences of $S$ with product in $Z(G)$.
Thus, we have
\[
2p-1 > p^k+p-1- \sum_{r=1}^i |K_r| - \sum_{s=1}^j |L_s| \geq
p^k+p-1-ip- (2p-1)j.
\]
Simplifying, this gives $i+2j \geq p^{k-1}$.
Since $i$ is at least $1$, we may throw away some sequences, if
necessary, and assume $i+2j=p^{k-1}$. Thus, we have a subsequence $T=K_1 \cdots K_i L_1 \cdots L_j$ of $S$
such that $i+2j=p^{k-1}$, each $K_r$ and each $L_s$ has product
contained in $Z(G)$, each $K_r$ has length $\leq p$ and each $L_s$'s has length in the range $[p+1, 2p-1]$. Note that
\[
|T| = \sum_r |K_r| + \sum_s |L_s| \leq pi+(2p-1)j= p^k-j \leq
p^k.
\]
Let us note also that $\Pi(K_r), \Pi(L_s) \subset Z(G)$ for
$r \leq i, s \leq j$ because $G/Z(G)$ is abelian. 

Now, $i+j \geq (i+2j)/2 =p^{k-1}/2 > p^{k-2}$.

\vspace{1mm}
{\it As $Z(G) \cong \mathbb{Z}_{p^{k-1}}$, we will find it helpful now to use additive notation.}

If some $\Pi(K_i)$ or $\Pi(L_s)$ contain $0$, then we are done. Otherwise, we consider
\[
K'_1 = \Pi(K_1), K'_r = \Pi(K_r) \cup \{0\} (r>0), L'_s = \Pi(L_s) \cup \{0\}.
\]

Denote $\Sigma := \sum_{r=1}^i K'_r + \sum_{s=1}^j L'_s$. We look at the stabilizer of $\Sigma$ in $Z(G)$. \\

\noindent
{\it Case I: $\Stab(\Sigma) = (0)$.}\\
Then, by the DGM theorem (\lemref{DGM}), we have 
\[
|\Sigma| \geq \min\left\{p^{k-1}, \sum_{r=1}^i |K'_r| + \sum_{s=1}^j
|L'_s| -i-j+1\right\} \geq \min \left\{p^{k-1}, 2i-1+3j-i-j+1\right\} = p^{k-1}.
\]
In other words, $\Sigma = Z(G)$. This is a contradiction, since $\Stab(Z(G))= Z(G)$. So we are done.\\

\noindent  {\it Case II: $\Stab(\Sigma) \neq (0)$.}\\
In this case, $k>1$. Also, $\Stab(\Sigma) \supseteq
\langle p^{k-2} \rangle$ in $\mathbb{Z}_{p^{k-1}}$. Recalling that $i+j > p^{k-2}$ as observed earlier, if we pick $k_r \in K_r, \ell_s \in L_s$ for all $r \leq i, s \leq j$, then there is a subsequence of $(k_1, \ldots k_i, \ell_1, \ldots \ell_j)$ whose sum (in $\mathbb{Z}_{p^{k-1}}$) is divisible by $p^{k-2}$. So, there is some integer $c$ such that $c p^{k-2} \in \Sigma$. Therefore, considering $-c p^{k-2}$ which is in the stabilizer, we get $0 \in \Sigma$, and we are done as seen above in \textit{Case I}.
\end{proof}

\vspace{4 mm}
Now we prove \thmref{fconstgeneraln}.
\begin{proof}[\textbf{Proof of \thmref{fconstgeneraln}}]  As $p=2$ is already dealt with in \secref{f-const dih dicy gp}, we assume $p$ is odd. Let $G = \mathbb{Z}_p\ltimes \mathbb{Z}_{n} =\langle x,y| x^{n}=y^p=1, x^gy=yx \rangle$, where $\ord_n(g) = p$. Consideration of the sequence
\[
(\underbrace{x, \ldots, x}, \underbrace{y, \ldots, y})
\]
where $x$ occurs $n-1$ times and $y$ occurs $p-1$ times, shows that $d(G) \geq n+p-1$. Write $n = p_1^{a_1} \cdots p_{\ell}^{a_{\ell}}$. As $g \not\equiv 1 \pmod{n}$, by the Chinese remainder theorem, there exists an $i$ for which $g \not\equiv 1 \pmod{p_i^{a_i}}$. Taking $q=p_i$, we have $\ord_{q^{a_i}}(g)=p$ because the order divides $p$ while it is not equal to $1$.\\

\noindent  \textit{Case I : ($q\neq p$)}\\
Then, $p|(q-1)$.  Let $h$ be a primitive root modulo $q^{a_i}$. Then, we have $\ord_q(h) = q-1$ and $g = h^{cq^{a_i-1}(q-1)/p}$ for some integer $c$ with $(p,c)=1$. Since $q-1\nmid cq^{a_i-1}\frac{q-1}{p}$, we have $g \not\equiv 1 \pmod{q}$.

Let $n=mq$ and write $H_1\coloneq \langle x^q \rangle \cong \mathbb{Z}_m$. Note that $H_1$ is normal in $G$. Further, we observe that $G/H_1 \cong \mathbb{Z}_p \ltimes \mathbb{Z}_q$. For, if not, the images of $x,y$ in $G/H_1$ would commute, which would imply that $xyH_1=x^gyH_1$ and hence, $x^{g-1} \in H_1= \langle x^q \rangle$, a contradiction of the fact that $g \not\equiv 1 \pmod{q}$. Therefore, we have a short exact sequence
\[
1 \rightarrow H_1 \rightarrow G \stackrel{\phi}{\rightarrow}
\Z_p \ltimes \Z_q \rightarrow 1.
\]

We know that $f(\Z_p\ltimes\Z_q)=q$ by \propref{f-const pq group prop}. Therefore, by \propref{extension}, we have 
\[
d(G) \le (d(H_1)-1)f(\Z_p\ltimes \Z_q) + d(\Z_p \ltimes \Z_q) = (m-1)q + p + q -1 = n+p-1.
\]

Therefore, $d(G) = n+p-1$, and since equality holds, again by \propref{extension}, 
\[
f(G) \le f(H_1)f(\Z_p\ltimes\Z_q) = mq =n.
\]
However, $f(G)\ge \ord(x) = n \implies f(G)=n$.\\

\noindent \textit{Case II : ($q = p$)}\\
We write $k$ in place of $a_i$ for simplicity of notation. Since $g^p \equiv 1 \pmod{p^{k}}$ but $g \not\equiv 1 \pmod{p^{k}}$, we must have $k >1$. Let us write $n=mp^{k}$. Then, $H_1 := \langle x^{p^{k}} \rangle
\cong \mathbb{Z}_m$, and we have a short exact sequence
\[
1 \rightarrow H_1 \rightarrow G \stackrel{\phi}{\rightarrow}
\mathbb{Z}_p \ltimes \mathbb{Z}_{p^{k}} \rightarrow 1.
\]
(Again, if not, the images of $x$ and $y$ in $G/H_1$ would commute, which would imply that $xyH_1=x^gyH_1$ and hence, $x^{g-1} \in H_1= \langle x^{p^k} \rangle$, a contradiction of the fact that $g \not\equiv 1 \pmod{p^k}$). Now, we know that $f(\Z_p\ltimes\Z_{p^k})=p^k$ and $d(\Z_p\ltimes\Z_{p^k})=p^k+p-1$ by \propref{df-ppower}. Therefore, using \propref{extension}, we get 
\[
d(G) \le (d(H_1)-1)f(\Z_p\ltimes \Z_{p^{k}}) + d(\Z_p \ltimes \Z_{p^{k}}) = (m-1)p^k + p^k + p -1 = n+p-1.
\]

Therefore, $d(G) = n+p-1$, and since equality holds, again by \propref{extension}, 
\[
f(G) \le f(H_1)f(\Z_p\ltimes\Z_{p^k}) = mp^k =n.
\]
However, $f(G)\ge \ord(x) = n\implies f(G)=n$.
\end{proof}

\vskip 5mm

\section{Bounds for $\hol(\mathbb{Z}_p)$} \label{hol bound sec}

Let $p$ be an odd prime and consider the holomorph $\hol(\mathbb{Z}_p) =  \langle x,y \ | \ x^p=y^{p-1}=1, x^gy=yx \rangle$ where $\ord_p(g)=p-1$. Recall that, as part of \conjref{conj5}, we conjectured that, $f(\hol(\mathbb{Z}_p)) = p$.

We prove this for the primes $3, 5$. For a general prime
$p>5$, we are able to prove only the weaker result \thmref{holozp}.

We prove the theorem now for primes $p>5$. After that, we will verify the conjecture in the case of $p=3, 5$.

\begin{proof}[\textbf{ Proof of \thmref{holozp}}]
We have a prime $p \geq 7$. We write $G$ for $\hol(\mathbb{Z}_p)$ and $H= \langle x \rangle \cong \mathbb{Z}_p$. To show $e(\hol(\mathbb{Z}_p)) \leq p + \frac{p-3}{2}$, we show that given a product-one sequence $S$ of length $p+k$ (where $k \geq \frac{p-1}{2}$) in $G$, there exists a proper product-one subsequence of $S$.  

Let $S = (x^{a_1}y^{b_1}, \ldots, x^{a_{p+k}}y^{b_{p+k}})$ and look at the sequence $S_y = (b_1, \ldots, b_{p+k})$ in $\mathbb{Z}_{p-1}$. We write $\mathbb{Z}_{p-1}$ additively. The following two cases emerge:\\

\noindent {\it Case 1. ($S_y$ has a zero-sum subsequence of length $\leq k$.)}\\
In this case, fixing such a zero-sum subsequence of $S_y$, the sum of the
rest of the elements of $S_y$ must also be $0$ since $\Pi(S) \subseteq H$ implies
$\sum_{i=1}^{p+k} b_i =0$. Looking at
the corresponding sequence $S'$ of elements in $S$, we have $\Pi(S')
\subseteq H$ and $p \leq |S'| < p+k$. Therefore, by \lemref{usefulbass}$(a)$, $S'$ contains a product-one subsequence. So, we are done in this case.\\

\noindent {\it Case 2. (Case 1 does not hold.)}\\
That is, every zero-sum subsequence of $S_y$ has length $>k$. We construct disjoint subsequences $B_1, \ldots, B_k$ of $S_y$ where no $B_i$ has any repeating elements in the following manner. Start with $k$ elements of $S_y$ and make $k$ singleton sets with these elements. Choose each of the remaining elements of $S_y$ and, one by one, include them in one of these $k$ sets that do not already contain a copy of this element - if such a set is there (if no such set exists, then discard that element). In this manner, we have $k$ disjoint subsequences $B_i$ in $S_y$. If $0 \in B_i$ for some $i$, then we have nothing to prove as before. Otherwise, consider $B_1'=B_1$ and $B_i' = B_i \cdot (0)$ for $i>1$. Therefore, by our present assumption, $0$ is not in $\sum_{i=1}^k B_i' = B$. Considering the stabilizer $\Stab(B)$ of $B$, we get two subcases:\\

\noindent
{\it Case 2(a). ($\Stab(B) \neq 0$)}\\
Put $\Stab(B) = \langle c \rangle$ (written additively); we have $c |p-1$, and so,
$c \leq \frac{p-1}{2} \leq k$. If $c_i \in B_i$ for each $i \leq k$,
then there exists a subsequence of $c_1, \ldots, c_k$ whose sum is
divisible by $c$. Thus, there is a positive integer $b$ such that
$bc \in B$. But then $-bc \in \langle c \rangle = \Stab(B)$ which implies $0 = bc +
(-bc) \in B$.\\

\noindent {\it Case 2(b). ($\Stab(B) = (0)$)}\\
Using the DGM theorem (\lemref{DGM}), we obtain 
\[
|B| \geq  \min \left\{p-1, \sum_{i=1}^k |B_i'|-k+1\right\} = \min \left\{p-1, \sum_{i=1}^k |B_i|\right\}.
\]

As $0 \not\in B$ by assumption, we have $\sum_{i=1}^k
|B_i| \leq p-2$; so,
$$|S \setminus B_1 \cdots B_k| \geq (p+k)-(p-2)=k+2.$$
Since every element of $S\setminus B_1\cdots B_k$ is present in each
of $B_1, \ldots, B_k$, each of them must have at least $k+1$
copies in $S_y$.

We claim that there is only one such element.

For, if $a,b \in B_i$ for all $i$, then $\sum_{i=1}^k |B_i| \geq 2k
\geq p-1$, which is a contradiction to the inequality above. Therefore, $S \setminus B_1 \cdots B_k$ consists of copies of a single element $a$. Hence, the number of times $a$
appears in $S$ at least $k+(k+2)=2k+2$ times. We claim that
$(a, p-1)=1$; else, we can find a zero-sum subsequence of length
$\frac{p-1}{(a,p-1)}$ in $S_y$. This would lead to a contradiction,
as $\frac{p-1}{(a,p-1)} \leq \frac{p-1}{2} \leq k$. 

Now, as in \textit{Case II} of proof of \propref{f-const pq group prop}, considering the isomorphism $G\xrightarrow{\sim} \Z_{p-1}\ltimes_{g'}\Z_{p}$ (where $(g')^a\equiv g\pmod{p}$), defined by $x \mapsto x$ and $y\mapsto y^a$, we can assume, without loss of generality, that $a=1$. 

Therefore, we get a subsequence $S'\subset S$ of the form
\[
S' = \left(x^{a_1}y, \ldots, x^{a_{p+1}}y\right) \subset S.
\]
Now, we look at the sequence formed by the powers of $x$ of these elements, $S'_x:= (a_1,
\ldots, a_{p+1})$. We claim that in either of the following two
cases hold, we are done:

\vspace{2mm}
\noindent {\it Case (I). There exist $\frac{p-1}{2}$ disjoint pairs in $S'_x$.}\\
{\it Case (II). There exist two 3-element subsets in $S'_x$ which are disjoint as subsequences.}

\vspace{2mm}
To show that we are done if \textit{Case (I)} holds,
consider $(d_1,d_1), \ldots, \left(d_{\frac{p-1}{2}}, d_{\frac{p-1}{2}}\right)
\subset S'_x$. We have the product
\[
\left(x^{d_1}y \cdots x^{d_{\frac{p-1}{2}}}y\right)
\left(x^{d_1}y \cdots x^{d_{\frac{p-1}{2}}}y\right) = 1
\]
because the power of
$y$ is $y^{p-1}=1$ and the power of $x$ is, (for some $d$), equal to
$x^{d(1+g^{(p-1)/2})} = 1$. Thus, we are done if \textit{Case (I)} holds, since
$\left(x^{d_1}y, \ldots, x^{d_{\frac{p-1}{2}}}y, x^{d_1}y, \ldots, x^{d_{\frac{p-1}{2}}}y\right) \subset S.$

To see that we are done if \textit{Case (II)} holds, let $W_1=(c_1,c_2,c_3), W_3 = (e_1,e_2,e_3) \subset S'_x$, where $c_{i}$ and $e_{i}$ are distinct. Write
$$S'_x \setminus W_1 W_3 = (e_4, \ldots, e_{p-2}).$$
Take $W_i = W_{i-1}\cdot (e_i)$ for $4 \leq i \leq p-4$ and take $W'_i =
(x^ay : a \in W_i)$ for all $i$. Using Lemma \ref{3 dist elts lem}, we get $|\Pi(W_1')|, |\Pi(W'_3)| \geq 4$. Now, we apply Lemma
\ref{inductive lem} to induct on $i$. We can see that $|\Pi(W'_i)|
\geq i+1$ for $3 \leq i \leq p-4$; in particular, $|\Pi(W'_{p-4})|
\geq p-3$. This is because since each element of $\Pi(W_i')$ is of the form $x^dy^i$, we have $W_i' \supseteq \{x^{d_1}y^i, \ldots, x^{d_{i+1}}y^i \}$ by the induction hypothesis, and so,
\[
\Pi(W_{i+1}') \supseteq \left\{x^{d_j+g^ic_{i+1}}\right\}_{1 \leq j \leq i+1} \bigcup \left\{x^{c_{i+1}+gd_j}\right\}_{1 \leq j \leq i+1}.
\]

Then $W := W_1' W_{p-4}' \subset S$ satisfies $|W| = 3+p-4=p-1$ and (by Cauchy-Davenport inequality),
$|\Pi(W)| \ge \min\{p, 4+(p-3)+2-1\} = p$. Thus $1 = x^0 \in \Pi(W)$. Therefore, if either of these cases \textit{(I)} or \textit{(II)} holds, we are done.\\

To finish the proof of the theorem in \textit{Case 2}, we shall examine all the situations where neither \textit{Case (I)} nor \textit{Case (II)} holds.

Recall the set-up - there is no zero-sum subsequence of $S_y$ of
length $\leq k$, and we have a sequence $S'= (x^{a_1}y, \ldots,
x^{a_{p+1}}y) \subset S$. We have written $S'_x = (a_1, \ldots,
a_{p+1})$. We shall consider four cases according to how many
distinct elements $S'_x$ contains. \\
$\bullet$ If $S'_x$ contains $5$ or more distinct elements, then either there are at least $6$ distinct elements, or one of them must repeat because $p-1>5$. Thus, we are done by \textit{Case (II)} as shown above.\\
$\bullet$ If $S'_x$ contains at most $3$ distinct elements, let us pair equal ones until we are left with at most $3$ elements. This gives rise to at least $\lceil (p+1-3)/2 \rceil = \frac{p-1}{2}$ pairs. Thus, again \textit{Case (I)} shows that we are done.\\
$\bullet$ If $S'_x$ contains exactly $4$ distinct elements, say
$a,b,c,d$ and, if at least two of them, say $a,b$ are present
multiple times, we have $\{a,b,c\}$ and $\{a,b,d\}$ in $S'_x$, and \textit{Case (II)} again proves our contention.\\
$\bullet$ Finally, we are left with the case when $S'_x$ has $4$ distinct elements with one element repeated $p-2$ times (and, therefore, the others not repeating). So,
\[
S'_x = (b,c,d,a,a, \ldots, a)
\]
where $a$ occurs $p-2$ times, and $a,b,c,d$ are distinct. As $b+c, b+d$ are unequal, at least one of them, say $b+c$ $\neq 2a$. So, $\frac{b-a}{a-c} \neq 1$ and we get $0 < \theta < p-1$ satisfying $g^{\theta} = \frac{b-a}{a-c}$. Consider the $(p-1)$-fold product below, where $x^by$ appears in the first position, $x^cy$ appears at the $(\theta +1)$-th position, and all other terms are $x^ay$. We have
\[
x^by x^ay x^a y \cdots x^ay x^cy x^a y \cdots x^a y
= x^{b+ \sum_{i=2}^{\theta} g^{i-1}a + g^{\theta}c + \sum_{i=\theta+2}^{p-1} g^{i-1} a}=1
\]
because the exponent of $x$ is $\left(\sum_{i=1}^{p-1} g^{i-1} \right) a + b-a + g^{\theta}(c-a)=0$.
Therefore, even in this case we have obtained $1 \in
\Pi(x^ay, \ldots , x^a y, x^by, x^cy)$ where $x^ay$ is repeated $p-3$ times. The proof of the theorem for $p>5$ is now complete.
\end{proof}

\vskip 5 mm

Now we prove \conjref{conj5} for $p=5$.

\begin{prop}\label{holz5}
The conjecture $f(\hol(\mathbb{Z}_p)) = p$ holds for $p= 3, 5$.
\end{prop}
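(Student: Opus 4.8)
The plan is to dispatch $p=3$ in one line and spend the effort on $p=5$.

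\emph{Case $p=3$.} Here $\hol(\Z_3)\cong\Z_2\ltimes\Z_3$, so \thmref{fconstgeneraln}, applied with the prime $2$ and $n=3$, gives $f(\hol(\Z_3))=3$ immediately; equivalently $\hol(\Z_3)\cong\mathbf{D}_6$ and one may quote \thmref{dihdic}(a).

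\emph{Case $p=5$.} Write $G=\hol(\Z_5)=\langle x,y\mid x^5=y^4=1,\ yxy^{-1}=x^g\rangle$ with $\ord_5(g)=4$ (say $g=2$), $H=\langle x\rangle\cong\Z_5$, and $G/H\cong\Z_4$. Since $5$ is prime, Bass (\cite{JB07}) gives $d(G)=4+5-1=8$, and Lemmas \ref{bass1}, \ref{bass2}, \ref{usefulbass} apply to $G$ with the role of ``$q$'' played by $5$. As $\ord(x)=5$ we have $f(G)\ge 5$, so the task is to show that \textbf{every sequence $S$ of length $8$ in $G$ contains a product-one subsequence of length $\le 5$}. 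First I would record the order identities $(x^a)^5=1$, $(x^ay^2)^2=1$, $(x^ay)^4=(x^ay^3)^4=1$ (immediate from $y^jx^ay^{-j}=x^{g^ja}$ together with $1+g+g^2+g^3\equiv 1+g^2\equiv 0\pmod 5$). These dispose of the cheap cases: if at least $5$ terms of $S$ lie in $H$, then $f(\Z_5)=d(\Z_5)=5$ gives a product-one subsequence of length $\le 5$; if some term $x^ay^2$ occurs twice, or some term $x^ay^{\pm1}$ occurs four times, we are again done. Hence I may assume: at most $4$ terms of $S$ lie in $H$; every term of the form $x^ay^2$ occurs at most once; every term of the form $x^ay^{\pm1}$ occurs at most three times.

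The main reduction uses the sequence $S_y\in\Z_4^{8}$ of $y$-exponents. Suppose first that some entry of $S_y$ equals $0$ (i.e.\ some term lies in $H$): delete one such entry, apply the Erd\H{o}s--Ginzburg--Ziv theorem in $\Z_4$ to the remaining $7$ entries to obtain a zero-sum subsequence of length exactly $4$, and re-adjoin the deleted $0$; this yields $T\subseteq S$ with $|T|=5$ and $\Pi(T)\subseteq H$, whence \lemref{usefulbass}(a) produces a product-one subsequence of length $\le 5$. So I may assume all $y$-exponents lie in $\{1,2,3\}$. More generally, the same mechanism succeeds whenever $S_y$ admits a zero-sum subsequence of length exactly $5$, and a short arithmetic check on the multiplicities $(c_1,c_2,c_3)$ of $1,2,3$ in $S_y$ shows that this fails only for a few concentrated patterns, essentially those with $c_2\in\{0,1,2\}$ and $c_1,c_3$ clustered on a single exponent.

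The analysis of these residual configurations is the step I expect to be the main obstacle: the ``quotient part'' here is $\Z_4$, which is not of prime order, so there is no clean Cauchy--Davenport or Erd\H{o}s--Ginzburg--Ziv shortcut. Instead one works inside $G$, using the explicit products $x^ay\cdot x^by^3=x^{a+gb}$ and $x^ay^2\cdot x^by^2=x^{a-b}$ (together with $(x^ay^2)^2=1$) to convert pairs of non-$H$ terms into $H$-valued or identity products, and then routing the resulting short $H$-valued pieces through \lemref{usefulbass}(b). In the extreme case where $S$ consists entirely of terms $x^{a_i}y$ (or entirely of terms $x^{a_i}y^3$) one instead reuses the ``three distinct exponents'' estimate \lemref{3 dist elts lem} and the inductive estimate \lemref{inductive lem}, exactly as in the proof of \thmref{holozp}, to force $|\Pi(W)|=|H|$ — hence $1\in\Pi(W)$ — for a suitable length-$4$ subsequence $W$. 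Since the residual patterns force either many equal or many distinct exponents, the multiplicity bounds recorded above (at most four terms in $H$; at most three copies of each $x^ay^{\pm1}$) guarantee, via pigeonhole, that enough distinct exponents are present for these estimates to bite; checking the finitely many patterns completes the proof that $f(G)\le 5$, so $f(\hol(\Z_5))=5$.
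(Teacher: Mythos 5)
Your reductions at the start are sound and essentially coincide with the paper's: the $p=3$ case, the lower bound $f(G)\ge 5$, the use of $S_y$, the EGZ-in-$\Z_4$ step when some term lies in $H$, and the appeal to \lemref{usefulbass}(a) once a length-$5$ subsequence with product in $H$ is found. Your observation that six terms with $y$-exponent $2$ force a repeated involution $x^ay^2$ is also a correct (and slightly simpler) substitute for the paper's use of $\langle x,y^2\rangle\cong\mathbf{D}_{10}$. But the argument stops exactly where the real work begins, and the plan you sketch for the residual configurations would not go through as stated. First, routing pairs $(x^ay,\,x^by^3)$ through \lemref{usefulbass}(b) cannot certify $f(G)\le 5$: each such pair has $|\Pi|\le 2$, so you need three of them to reach $\sum_i|\Pi(T_i)|\ge 5$, and the product-one subsequence that \lemref{usefulbass}(b) produces is a concatenation of some of the $T_i$'s, hence may have length $6$. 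Second, in the all-$x^{a_i}y$ (or all-$x^{a_i}y^3$) configurations you cannot reuse the machinery of \thmref{holozp} ``exactly'': \lemref{inductive lem} requires $|A|<\ord_q(u)=4$, so it can never push $|\Pi(W)|$ past $4$ for a length-$4$ block, and the construction in Case (II) of that proof (building $W_{p-4}$ from $W_3$ and concatenating with $W_1'$) degenerates precisely at $p=5$; this is why the paper proves \thmref{holozp} only for $p\ge 7$ and treats $p=5$ by a separate argument.

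Third, your description of the surviving patterns is inaccurate in a way that hides the hardest case: after all your cheap reductions, the balanced configuration with four terms $x^{a_i}y$, four terms $x^{b_j}y^3$ and no $y^2$- or $H$-terms (the paper's Case IIa), and likewise the $(5,3)$ split (Case IIb), remain, and here any product-one subsequence must have length $2$ or $4$, so neither your pairing-plus-\lemref{usefulbass}(b) device nor the \thmref{holozp} induction applies. The paper's key idea for these cases -- form elements $x^ty^2$ as two-fold products from the $y$-side and from the $y^3$-side, show each side yields at least three distinct such involutions (after first disposing of ``two equal pairs'' and ``four distinct exponents'' via the explicit identity $x^ay\,x^by\,x^cy\,x^dy=x^{a+2b-c-2d}$), and then pigeonhole on the five possible exponents of $x^ty^2$ to find a coincidence, giving a length-$4$ product-one subsequence since $(x^ty^2)^2=1$ -- is absent from your sketch, and without it (or a genuine replacement) the proof of $f(\hol(\Z_5))\le 5$ is not complete.
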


\begin{proof}
Since $\hol(\Z_3)\cong \mathbf{D}_6$, \thmref{dihdic} implies that $f(\hol(\Z_3))= 3$.

Now, we prove the conjecture for $p=5$. Now, $G =
\hol(\mathbb{Z}_5) = \langle x,y|x^5=y^4=1, x^2y=yx \rangle$. Note that $d(G)= 8$ \cite{JB07}. We take $H = \langle x \rangle \cong \mathbb{Z}_5$, and write $H^{\ast} = H \setminus \{1\}$. As
$x$ has order $5$, we have $e(G) \geq f(G) \geq 5$. Let $S$ be a
sequence of length $8$ in $G$. Denote by $S_y$, the sequence of
powers of $y$ of the elements of $S$; say, $S_y = (a_1, \ldots,
a_8)$. This is a subsequence of length $8$ in $\mathbb{Z}_4$, which we write additively. Define $\Sigma$ to be the set of sums of any $5$ elements of $S_y$. 

Now, if $0 \in \Sigma$, then considering the corresponding elements
in $S$, we have a sequence $T \subset S$ such that $|T|=5$ and
$\Pi(T) \subset H$. Then \lemref{usefulbass}$(a)$ gives us a
product-one subsequence of $T$, and we are done in this case.

Consider the general situation now. For $0 \leq i \leq 2$, let
$n(i)$ denote the number of times $i$ appears in $S_y$ (could be $0$
also). We have several cases.\\
$\bullet$ If $n(0) \neq 0$, let $S' = S_y\setminus(0)$. Since $|S'|=7$, by \cite{EGZ}, we get a length $4$ subsequence of $S'$ whose sum is $0$; this gives a length $5$ subsequence of $S_y$ whose sum is $0$. So, we are done in this case.\\
$\bullet$ If $n(2) \geq 6$, we have then $6$ elements of $S$ in the
subgroup $ \langle x,y^2 \rangle \cong \mathbf{D}_{10}$. Since this group has $d(\mathbf{D}_{10})=6$ and
$f(\mathbf{D}_{10})=5$, as we have proved earlier, we are again done.\\
$\bullet$ If $3 \leq n(2) \leq 5$, then $n(1)+n(3) \geq 3$. Thus, we
either have two copies of $1$ or two copies of $3$. Together with
three copies of $2$, we have a length $5$ subsequence of $S_y$ whose
sum is $0$ in $\mathbb{Z}_4$. Therefore, we are done in this case also.\\
$\bullet$ If $1 \leq n(2) \leq 2$ and both $n(1), n(3)>0$, then one
of $1$ and $3$ appears thrice at least. Again, we can get a length
$5$ subsequence in $S_y$ summing to $0$. Since $1+1+1+3+2$ and
$3+3+3+1+2$ are both $0 \pmod{4}$, we are done in this case as well.\\
$\bullet$ Thus, we may assume either $n(2)=0$ or $n(1) \geq 6$ or
$n(3) \geq 6$.

If the sequence $S^{-1}$ of inverses of elements of the sequence $S$ has a product-one subsequence of length $\le 5$, then $S$ also has one such subsequence. Thus, we may assume without loss of generality that $n(1) \geq
n(3)$. Now, we have several cases.\\

\noindent
{\it Case I.  ($n(1) \ge 6$)}.\\
We then have $6$ elements $x^{a_i}y$ with $a_i \in \mathbb{Z}_5$ ($1
\leq i \leq 6$). If we look at any $4$ of them, then the product
looks like
\[
x^ay x^by x^cy x^dy = x^{a+2b-c-2d}.
\]

Among the six $a_i$s, if we have two equal pairs, we get a length $4$ subsequence of $S$ whose product is $1$ by the above formula. We are done in this case. In the contrary case, the six $a_i$s must have at least four distinct values; say, $a,a+1,a+2,a+3$. Now
$$x^ay x^{a+1}y x^{a+3}y x^{a+2}y = x^{-5}=1.$$
Since this also leads to a product-one subsequence of $S$ of length $4$, \textit{Case I} is done.\\

\noindent
{\it Case IIa. ($n(1)=4$ and $n(2)=0$)}\\
Then $n(3)=4$. Among elements of $S$ of the form $x^ay$, if either
two equal pairs occur, or all four are distinct, we are done as in \textit{Case I}. Therefore, consider the only other possibility, when the
elements are $x^ay, x^ay, x^by, x^cy$ where $a \neq b$ ($a=c$ is a possibility). By multiplying two among them, we may obtain the
distinct elements $x^{3a}y^2, x^{2a+b}y^2, x^{a+2b}y^2$. Hence, we have obtained three distinct elements from the
assumption that $n(1)=4$. Similarly, from $n(3)=4$, we have four
elements of the form $x^uy^3, x^u y^3, x^v y^3, x^ry^3$ form which
multiplication of pairs produces three distinct elements of the form
$x^ty^2$. These six elements have the same power of $y$ (viz.,
$y^2$) and hence, cannot be all different because $x$ has only five
possible, different exponents. Thus, we have an element $x^ty^2$
obtained from multiplying two terms from $x^ay, x^ay, x^by, x^cy$ and also
obtained from multiplying two terms of the form $x^u y^3, x^u y^3, x^v y^3, x^ry^3$. Since
$x^ty^2 x^ty^2 = 1$, we have a product-one subsequence of $S$ of length $4$. Therefore, we are done in \textit{Case IIa} also.\\

\noindent  {\it Case IIb. ($n(1)=5$ and $n(2)=0$)}\\
Then $n(3)=3$. Once again, as in \textit{Case IIa}, if we have either four
distinct elements or two equal pairs among the elements of the form $x^ay$, we are done. So, we look at the only other possibility
$$x^ay, x^ay, x^ay, x^by, x^cy,$$
where $a, b$ and $c$ are distinct. The two-fold products produce the elements $x^{t}y^2$ for
\[
t = 3a, a+2b, a+2c, b+2a, c+2a, b+2c, c+2b.
\]

We claim that either the five elements corresponding to $t=3a, a+2b,
b+2a, a+2c, c+2a$ are distinct mod $5$, or we have a product-one
subsequence of length $4$ in $S$. Indeed, as $a,b,c$ are distinct
mod $5$, the five elements above are distinct unless either $a+2b =
2a+c$ or $a+2c = 2a+b$. 

If $a+2b=2a+c$, then $x^ay x^by x^c y x^ay = x^{a+2b-c-2a}=1$ and we
are done. Similarly, if $a+2c=2a+b$, then $x^ay x^cy x^by x^ay  = 1$.

Otherwise, we have five distinct powers of $x$ which gives
\[
\left\{x^{3a}y^2,x^{a+2b}y^2, x^{a+2c}y^2, x^{2a+b}y^2, x^{2a+c}y^2\right\} = \left\{x^iy^2: 0 \leq i \leq 4\right\}.
\]

Now, if we consider any two elements of the form $x^cy^3$ (note that $n(3)=3$) in $S$, their product is of the form $x^dy^2$, and hence, must be in the above set. Elements of the set are their own inverses. Hence, there is a length $4$ subsequence of $S$ whose product is $1$. Therefore, we are done in \textit{Case IIb} also. The proof of the conjecture is complete for $p=5$.
\end{proof}

We end this section by proving \propref{ebound}, thereby establishing a non-obvious lower bound for $e(\hol(\Z_p))$.

\begin{proof}[Sketch of proof of \propref{ebound}]
    Since $f(\hol(\Z_5)) < d(\hol(\Z_5))$, we have that $e(\hol(\Z_5)) < d(\hol(\Z_5)) = 8$. If a minimal product-one sequence $S$ of elements of $\hol(\Z_5)$ of size $7$ exists, focus on $S_y$, the sequence of powers of $y$ of elements of $S$. Note that, as in the proof of \propref{holz5}, if there is a subsequence of $S_y$ of size $\le 2$ which adds to $0$, it will contradict the minimality of $S$. To avoid that, $S_y$ must contain six copies of $1$ or six copies of $3$. But in that case, we get a contradiction by \textit{Case I} from the proof of \propref{holz5}. Therefore, $e(\hol(\Z_5))\le 6$. So, it is enough to show that $e(\hol(\Z_p)) \ge p+1\,\forall\, p\ge 5$.
    
    For any prime $p\ge 5$, consider the following sequence of size $p+1$:
    \[
    S=(y, \underbrace{xy, \ldots, xy}, x^a y^{\frac{p-1}{2}}, x^b y^{\frac{p-1}{2}} )
    \]
    where $xy$ occurs $p-2$ times, and $a\neq b$ are picked from $\Z_p\setminus C$, where $C$ is the set of indices $c$ such that $x^{c}y^{\frac{p-1}{2}}$ can be obtained as a product of some subsequence of $(y, xy, \ldots, xy)$, where $xy$ appears $p-2$ times. Note that $|C| = \frac{p+1}{2}$, where one term comes from $(xy)^{\frac{p-1}{2}}$, and the other $\frac{p-1}{2}$ terms come from the cyclic products of $(y, xy, \ldots, xy)$, where $xy$ appears $\frac{p-3}{2}$ times. Therefore, $|\Z_p\setminus C| = \frac{p-1}{2} \ge 2$.

    Since $|S|>p$ and $\Pi(S)\subseteq\langle x \rangle$, by \lemref{usefulbass}$(a)$, $S$ has a product-one subsequence. Therefore, if we show that $S$ does not have a proper subsequence which is product-one, then $S$ is a minimal product-one sequence, which proves that $e(\Z_p)\ge p+1$.

    Note that, $S_y = \left(1, 1, \ldots, 1, \frac{p-1}{2}, \frac{p-1}{2}\right)$, and $y$ powers of any product-one subsequence adds up to $0$. The only such proper subsequences of $S_y$ are $\left(\frac{p-1}{2}, \frac{p-1}{2}\right)$ and $\left(1,1,\ldots, 1, \frac{p-1}{2}\right)$. Since $a\neq b$ and $\left(x^cy^{\frac{p-1}{2}}\right)^{-1} = x^cy^{\frac{p-1}{2}}$, the subsequence corresponding to $\left(\frac{p-1}{2}, \frac{p-1}{2}\right)$ is not product-one. Taking any sequence corresponding to $\left(1, 1, \ldots, 1, \frac{p-1}{2}\right)$, if product under any permutation is $1$, then all cyclic products corresponding to that permutation are $1$ as well, so we can assume the term $x^d y^{\frac{p-1}{2}}$ appears at the end. Now, the rest of the product is either $(xy)^{\frac{p-1}{2}}$, or some cyclic product of $(y, xy, \ldots, xy)$. Since we chose $a, b\notin C$, the product can not be $1$. Therefore, we see that $S$ does not have a proper subsequence which is product-one. So, we are done.
\end{proof}

\begin{remark}
    We see that $d(\hol(\Z_5))= 8$ \cite{JB07}, $e(\hol(\Z_5))= 6,$ and $f(\hol(\Z_5))=5$, so the inequalities $d(G)\ge e(G)\ge f(G)$ are all strict for $G=\hol(Z_5)$.   
\end{remark}
\vskip 5mm

\section{Computing $d_k(G)$}

The constant $f(G)$ has certain properties that provide more information on the Davenport constant and some of its variants. We have seen one such result in \propref{extension}.
Recall the definition of $d_k(G)$ from \defref{d_k}. It is easy to see that $d_k(G) \leq k d(G)$ for all $k$, but using $f(G)$, we can strengthen this inequality. The resulting statement is that of \propref{dkintermsoff}, which we prove now.

\begin{proof}[\textbf{Proof of \propref{dkintermsoff}}]
Let $S$ be a sequence in $G$ with $|S| = (k-1)f(G) + d(G)$. Let us keep removing disjoint product-one subsequences $S_1, \ldots, S_t$ of length $\leq f(G)$ until no such subsequences remain. Then, by the definition of $d(G)$ and $f(G)$, we have
\[
|S \setminus S_1 \cdots S_t| < d(G).
\]
Therefore, $d(G) > |S| - \sum_{i=1}^t |S_i| \geq |S| - tf(G)=
(k-1-t)f(G) + d(G)$. So, we get $t \geq k$. Thus we get $k$ disjoint product-one subsequences of $S$, and the proof is complete.
\end{proof}

As a consequence of this proposition, we can compute the $k$-th Davenport constant of several groups. We compile them in the following corollary.

\begin{cor} For any prime $p$, and positive integers $k, n$, we have
    \begin{enumerate}[label=(\alph*)]
        \item $d_k(\mathbf{D}_{2n}) = nk + 1$.
        \item $d_k(\mathbf{Q}_{4n}) = 2nk + 1$.
        \item $d_k(\Z_p\ltimes\Z_n) = nk + p -1$.
    \end{enumerate}
\end{cor}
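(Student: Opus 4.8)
The plan is to derive each of the three equalities by combining the upper bound from \propref{dkintermsoff} with a matching lower bound obtained by exhibiting an explicit sequence that admits only $k-1$ disjoint product-one subsequences. For the upper bounds, I would simply substitute the known values of $d(G)$ and $f(G)$ into the inequality $d_k(G)\le (k-1)f(G)+d(G)$: for $\mathbf{D}_{2n}$ we have $d=n+1$ and $f=n$ by \thmref{dihdic}(a), giving $d_k\le (k-1)n+(n+1)=nk+1$; for $\mathbf{Q}_{4n}$ we have $d=2n+1$ and $f=2n$ by \thmref{dihdic}(b), giving $d_k\le 2nk+1$; and for $\Z_p\ltimes\Z_n$ we have $d=n+p-1$ and $f=n$ by \thmref{fconstgeneraln}, giving $d_k\le (k-1)n+(n+p-1)=nk+p-1$. (The case $n\le 2$ for the dihedral and dicyclic groups, and $n=1$ for $\Z_p\ltimes\Z_n$, should be noted as degenerate or handled by the abelian theory if the stated ranges demand it, but the formulas as displayed follow the $n\ge 3$, $n\ge 2$, $n>1$ conventions used earlier.)

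For the lower bounds I would build on the extremal product-one-free sequences already identified in the paper. For $\mathbf{D}_{2n}$, take the sequence consisting of $nk$ copies of a fixed generator $x$ of $H=\langle x\rangle$ together with one extra copy of $x$ (so $nk+1$ copies of $x$, not yet—wait): more precisely, take $S=(x,\ldots,x)$ with $x$ appearing $nk$ times followed by a single $x^s y$, i.e. $nk+1$ terms; any product-one subsequence must either be a block of $n$ copies of $x$ or must use the lone reflection together with... actually the cleanest choice is $S$ consisting of $nk$ copies of $x$ alone plus nothing else has length $nk$ and splits into exactly $k$ disjoint product-one subsequences, so instead take $S$ of length $nk$ which yields only $k-1$ disjoint product-one pieces by using a primitive generator so that product-one subsequences inside $H$ have length divisible by $n$ — here one must be careful. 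The robust approach: take the sequence which is $(k-1)$ concatenated copies of a maximal product-one-free sequence of length $d(G)-1=f(G)$ together with one more maximal product-one-free block, arranged so that any product-one subsequence lies within one block's worth of material; concretely, $nk$ copies of a generator $x$ of order $n$ gives a length-$nk$ sequence whose product-one subsequences are exactly unions of length-$n$ blocks, yielding at most $k$ — so to get the lower bound $d_k\ge nk+1$ I take $nk$ copies of $x$ plus one reflection $y$: now a product-one subsequence either is a multiple-of-$n$ block of $x$'s (at most $k$ disjoint ones, but using all $nk$ copies leaves the reflection unused) or contains $y$, but a single $y$ is not product-one and there is only one reflection, so in fact one cannot even extract $k$ disjoint product-one subsequences that exhaust enough — here I realize the correct extremal sequence is $nk$ copies of $x$ where $\gcd$ issues don't arise since $\langle x\rangle\cong\Z_n$ and $d_k(\Z_n)=nk+1-1=nk$... hmm, $d_k(\Z_n)=nk$ actually means length $nk$ suffices, so I instead need length $nk$ to NOT suffice in $\mathbf{D}_{2n}$: take $nk$ copies of $x$ and the sequence has exactly $k$ disjoint product-one subsequences, which is bad. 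So the right lower-bound sequence is: $(n-1)$ copies of $x$ repeated, i.e. take $x$ with multiplicity $nk-1$... no. Let me state it as: the sequence $S_0$ of length $d(G)-1$ that is extremal product-one-free (for $\mathbf{D}_{2n}$, $n-1$ copies of $x$ and one reflection, total $n$), and form $S=S_0^{(1)}\cdots S_0^{(k-1)}\cdot S_0'$ where $S_0'$ is another extremal product-one-free block; the total length is $k(d(G)-1)=k\cdot n$, hmm that gives $nk$ not $nk$. The real statement: concatenating $k$ copies of an extremal product-one-free sequence need not be product-one-free, but one can often arrange a sequence of length $nk$ with only $k-1$ disjoint product-one subsequences, giving $d_k\ge nk+1$.

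Thus the main obstacle — and where I would spend the real effort — is constructing, for each of the three families, an explicit sequence of length exactly $(k-1)f(G)+d(G)-1$ that does \emph{not} contain $k$ disjoint product-one subsequences, thereby forcing $d_k(G)\ge (k-1)f(G)+d(G)$. For $\mathbf{D}_{2n}$: take $nk$ copies of $x$ together with a single reflection $y$ (length $nk+1-1=nk$... so length must be $nk$ to prove $d_k\ge nk+1$? No: to prove $d_k\ge nk+1$ I need a bad sequence of length $nk$, i.e. one with only $k-1$ disjoint product-one subsequences). Concretely for $\mathbf{D}_{2n}$, the sequence consisting of $n(k-1)$ copies of $x$ followed by one extremal product-one-free sequence of $\mathbf{D}_{2n}$ of length $n$ (namely $n-1$ copies of $x$ and one reflection) has total length $n(k-1)+n=nk$; its only product-one subsequences sit inside $H$-blocks of length a multiple of $n$ using only the $x$'s, of which there are $n(k-1)+(n-1)=nk-1$ copies of $x$, yielding at most $k-1$ disjoint product-one subsequences since $nk-1<nk$ and the lone reflection cannot participate. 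An identical recipe works for $\mathbf{Q}_{4n}$ (using $\ord(x)=2n$ and an extremal product-one-free sequence of length $2n$) and for $\Z_p\ltimes\Z_n$ (using $\ord(x)=n$ and the extremal product-one-free sequence $(x,\ldots,x,y,\ldots,y)$ with $n-1$ copies of $x$ and $p-1$ copies of $y$, of length $n+p-2$), giving bad sequences of lengths $2nk$ and $nk+p-2$ respectively — hence $d_k\ge 2nk+1$ and $d_k\ge nk+p-1$. Verifying that these sequences genuinely fail to contain $k$ disjoint product-one subsequences requires, in the metacyclic case, invoking \lemref{usefulbass} and \lemref{bass1} to control which mixed subsequences (involving $y$-terms) can be product-one; that bookkeeping is the one genuinely non-routine part, and I would handle it by noting that any product-one subsequence either lies in $H$ (so has length a multiple of $n$ when built from copies of $x$) or must contain at least $p$ terms involving $y$, so that at most $\lfloor (p-1)/p\rfloor = 0$ extra such subsequences beyond the $H$-blocks are available from the fixed bad sequence. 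Combining the two bounds closes each equality.
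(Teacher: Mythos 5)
Your proposal is correct and follows essentially the same route as the paper: the upper bounds come from substituting the known values of $d(G)$ and $f(G)$ into \propref{dkintermsoff}, and your final lower-bound sequences ($nk-1$ copies of $x$ plus one reflection for $\mathbf{D}_{2n}$, $2nk-1$ copies of $x$ plus one reflection for $\mathbf{Q}_{4n}$, and $nk-1$ copies of $x$ plus $p-1$ copies of $y$ for $\Z_p\ltimes\Z_n$) are exactly the ones the paper uses. The many false starts in your writeup eventually resolve to the right construction, and your closing observation that a product-one subsequence must use a multiple of $p$ many $y$-terms (hence none) is the same elementary projection argument the paper leaves implicit.
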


\begin{proof}
    By \propref{dkintermsoff}, we have that
    \begin{align*}
        &d_k(\mathbf{D}_{2n}) \le (k-1) f(\mathbf{D}_{2n}) + d(\mathbf{D}_{2n}) = (k-1)n + n + 1 = nk +1.\\
        &d_k(\mathbf{Q}_{4n}) \le (k-1) f(\mathbf{Q}_{4n}) + d(\mathbf{Q}_{4n}) = (k-1)2n + 2n + 1 = 2nk +1.\\
        &d_k(\Z_p\ltimes\Z_n) \le (k-1) f(\Z_p\ltimes\Z_n) + d(\Z_p\ltimes\Z_n) = (k-1)n + n + p - 1 = nk + p - 1.
    \end{align*}
    
    The lower bounds are easy to obtain using the following sequences: $nk- 1$ many copies of $x$ and a single copy of $y$ for $\mathbf{D}_{2n}$; $2nk-1$ many copies of $x$ and a single copy of $y$ for $\mathbf{Q}_{4n}$; $nk - 1$ many copies of $x$ and $p-1$ many copies of $y$ for $\Z_p\ltimes\Z_n$ (recall the descriptions of $\mathbf{D}_{2n}, \mathbf{Q}_{4n}$ and $\Z_p\ltimes\Z_n$ from \S~\ref{notation}).
\end{proof}

\bigskip

\noindent \emph{\textbf{Acknowledgments.}}
The first, second, and third authors would like to thank the Indian Statistical Institute, Bangalore centre for providing an ideal environment to carry out this work. The second and third author express their gratitude to NBHM for financial support during the period of this work.

\bibliographystyle{plain}  

\end{document}